\theoremstyle{plain}
\newtheorem{thm}{\protect\theoremname}
  \newtheorem{lem}[thm]{\protect\lemmaname}
  \newtheorem{prop}[thm]{\protect\propositionname}
\theoremstyle{definition}
 \newtheorem{defn}[thm]{\protect\definitionname}
 \newtheorem{example}[thm]{\protect\examplename}
\newtheorem{assumption}[thm]{Assumption}
\theoremstyle{remark}
\newtheorem{remark}[thm]{Remark}
\setlist{leftmargin=*}
  \providecommand{\lemmaname}{Lemma}
  \providecommand{\propositionname}{Proposition}
\providecommand{\theoremname}{Theorem}
\providecommand{\corollaryname}{Corollary}
\providecommand{\definitionname}{Definition}
\providecommand{\examplename}{Example}
\let\div\relax
\DeclareMathOperator*{\div}{div}
\DeclareMathOperator*{\curl}{curl}
\DeclareMathOperator*{\esssup}{ess\,sup}
\DeclareMathOperator*{\supp}{supp}
\newcommand{\norm}[1]{\left\Vert#1\right\Vert}
\newcommand{\psic}[1]{\psi_{#1}}
\newcommand{\nuc}[1]{\nu_{#1}}
\newcommand{\partialc}[1]{\partial_{#1}}
 \newcommand{\psich}[1]{{\hat{\psi}}_{#1}}
 \newcommand{\f}[1]{\hat {#1}}
\begin{document}
\def\C{\mathbb{C}}
\def\R{\mathbb{R}}
\def\N{\mathbb{N}}
\def\Hcurl{{\mathcal{H}(\curl,\Omega)}}
\def\Hcurlzero{\mathcal{H}(\curl 0,\Omega)}
\def\Hdotcurl{\mathcal{\dot H}(\curl,\Omega)}
\def\Hocurl{\mathcal{H}_{0}(\curl,\Omega)}
\def\Hdotocurl{\mathcal{\dot H}_{0}(\curl,\Omega)}
\def\Hdiv{\mathcal{H}(\div,\Omega)}
\def\Hdotdiv{\mathcal{\dot H}(\div,\Omega)}
\def\Calpha{C^{0,\alpha}(\overline{\Omega};C^{3})}
\def\bo{\partial\Omega}
\def\Hmone{{\mathcal{H}}^{-1} (\Omega)}
\def\Hdivzero{\mathcal{H}(\div 0,\Omega)}
\def\Hdotdivzero{\mathcal{\dot H}(\div 0,\Omega)}
\def\Hdivzerozero{\mathcal{H}_0(\div 0,\Omega)}
\def\Hdotdivzerozero{\mathcal{\dot H}_0(\div 0,\Omega)}
\def\Honetzero{{\mathcal{H}}^{1}_{t0} (\Omega)}
\def\Honed{\dot{\mathcal{H}}^1 (R^3)}
\def\Hone{\dot{\mathcal{H}}^1(B_R) }
\def\Hdone{\mathcal{H}}
\def\Honeomega{\mathcal{H}(\Omega)}
\def\I{\mathcal{I}}
\def\Honesq{ \mathcal{H}_1 }
\def\dom{\mathrm{Dom}}
\def\ran{\mathrm{Rang}}
\def\ker{\mathrm{Ker}}
\def\kn{K_N(\Omega)}
\def\kt{K_T(\Omega)}

\newcommand{\hookdownarrow}{\mathrel{\rotatebox[origin=c]{-90}{$\hookrightarrow$}}}

\renewcommand{\epsilon}{\varepsilon}

\title{Essential spectrum for Maxwell's equations}

\author[G.S. Alberti]{Giovanni S. Alberti}
\address{Department of Mathematics, University of Genoa, Via Dodecaneso 35, 16146 Genoa, Italy.}
\email{alberti@dima.unige.it}

\author[M. Brown]{Malcolm Brown}
\address{Cardiff School of Computer Science and Informatics, Queens Buildings, 5 The Parade, Cardiff, CF24 3AA, UK.}
\email{brownbm@cardiff.ac.uk}

\author[M. Marletta]{Marco Marletta}
\address{Cardiff School of Mathematics, 21-23 Senghennydd Road, Cardiff, CF24 4AG, UK.}
\email{marlettam@cardiff.ac.uk}

\author[I. Wood]{Ian Wood}
\address{School of Mathematics, Statistics and
Actuarial Science,
University of Kent,
Sibson Building,
Canterbury CT2 7FS, UK.}
\email{I.Wood@kent.ac.uk}

\thanks{{\bf Acknowledgements:} The authors express their sincere thanks to Dr Pedro Caro of BCAM, who visited us on several occasions
and provided a lot of helpful comments and useful insights. The authors are also very grateful to the 
UK Engineering and Physical Sciences Research Council for support under grant EP/K024078/1 and for the funding of the LMS and EPSRC
(grant EP/K040154/1) which enabled them to attend the LMS-EPSRC Durham Symposium on {\em Mathematical and Computational Aspects of Maxwell's Equations}. }

\begin{abstract}
We study the essential spectrum of operator pencils associated with anisotropic Maxwell equations, with permittivity $\epsilon$, permeability $\mu$ and conductivity $\sigma$, on finitely connected unbounded domains. The main result is that the essential spectrum of the Maxwell pencil is the union of two 
sets: namely, the spectrum of the pencil $\div((\omega\epsilon + i \sigma) \nabla\,\cdot\,)$, and the essential spectrum of the Maxwell pencil with constant coefficients. 
We expect the analysis to be of more general interest and to open avenues to investigation of other questions concerning Maxwell's and related systems.
\end{abstract}

\date{24th April 2018}
\subjclass[2010]{35Q61, 35P05, 35J46, 78A25}

\maketitle

\section{Introduction}
In this paper we consider the essential spectrum of linear operator pencils arising from the Maxwell system
\begin{equation}
\left\{ \begin{array}{ll}
\curl H=-i (\omega\varepsilon+i\sigma)E& \text{in $\Omega$,}\\
\curl E=i\omega\mu H & \text{in $\Omega$,}\\

\end{array} \right.  \label{eq:maxwell}
\end{equation}
where  $\Omega\subseteq {\mathbb R}^3$ is a finitely connected domain, with boundary condition 
\[ \mbox{ $ \nu\times E = 0$ on $\partial \Omega$} \]
if $\Omega$ has a boundary. In these equations  $\omega$ is the pencil spectral parameter, $\epsilon$ the electric permittivity, $\mu$ the magnetic
permeability and $\sigma$ is the conductivity; $\nu$ is the unit normal to the boundary. 

Lassas \cite{lassas-1998} already studied this problem on a bounded
domain with $C^{1,1}$ boundary so in this article our primary concern is to treat unbounded
domains which provides additional sources for essential spectrum. However, even for bounded domains, we are able to relax the required boundary regularity to Lipschitz continuity. Like Lassas we allow the permittivity, permeability and conductivity to be tensor valued (i.e.\ we allow anisotropy);
however we make the physically realistic assumption that, at infinity, these coefficients approach isotropic constant values. 

Maxwell systems in infinite domains are usually studied in the context of scattering, with a Silver-M\"{u}ller radiation condition
imposed at infinity, see, e.g.\ \cite[p. 10]{MONK-2003} and \cite{cakoni-2011,cakoni-2014}. Scattering theory is sometimes regarded as the study of solutions 
when the spectral parameter lies in the essential spectrum, though the fact that the Maxwell system already has non-trivial
essential spectrum in bounded domains indicates that such an interpretation involves local conditions as well
as the study of radiation to infinity. The case of zero conductivity $\sigma\equiv 0$ is substantially simpler, both for bounded 
and unbounded domains. However it is also physically unrealistic in numerous applications, including imaging 
\cite{MR2581979,MR2783934,MR2294900,MR2296930}.

The main technical difficulty in dealing with the essential spectrum of Maxwell systems in infinite domains is 
the fact that compactly supported perturbations to the coefficients do change the essential spectrum, as is clear even
for bounded domains from \cite{lassas-1998}. This means that techniques such as Glazman decomposition, useful for 
Schr\"{o}dinger operators, are no longer helpful. We use instead a Helmholtz decomposition
inspired by \cite{alberti-capdeboscq-2016,alberti-maxwell-holder}  together with further decompositions of the resulting $2\times 2$ block operator matrices. As in \cite{AC2014}, this approach allows us to substantially reduce Maxwell's system to an elliptic problem.
The main result is stated in Theorem \ref{thm:adv}: the essential spectrum of the Maxwell pencil is the union of two 
sets: namely, the spectrum of the pencil $\div((\omega\epsilon + i \sigma) \nabla\,\cdot\,)$ acting between suitable
spaces, together with the essential spectrum of the Maxwell pencil with constant coefficients. The 
spectral geometric question of how the topology of $\Omega$ at infinity is reflected in the essential spectrum of a constant coefficient
Maxwell operator is also interesting, and an avenue for future work.

{Our original motivation for the investigations in this paper came from our study of inverse problems in a slab for the Maxwell system with conductivity.
However a knowledge of the essential spectrum has much more fundamental importance. It is a first step towards determination of the absolutely continuous subspace 
of an operator and hence the behaviour of its semi-group, as required, e.g., for the study of Vlasov-Maxwell systems. It can also be a key component in the analysis of
certain types of homogenisation problem.}

\section{Main result}
We shall study the Maxwell system on a finitely-connected domain $\Omega\subseteq\mathbb{R}^{3}$.
Prototype examples include exterior domains $\Omega:=\mathbb{R}^{3}\setminus \overline{\Omega'}$ in which $\Omega'$ 
has finitely many simply connected components; the case of an infinite slab, $\Omega = \R^2\times (0,1)$, or
a half-space $\Omega = \R^2\times (0,\infty)$; domains with cylindrical ends, such as waveguides; and indeed the case $\Omega=\R^3$ (see Assumption~\ref{mm:HD} and Proposition~\ref{prop:mm9} below for more details).  
The boundary $\bo$, if non-empty,  
will be of  Lipschitz type, and the coefficients $\varepsilon$, $\sigma$ and $\mu$ will be assumed to lie in
$L^{\infty}\left(\Omega;\mathbb{R}^{3\times3}\right)$ and be such that for some $\Lambda>0$ and every $\eta\in\mathbb{\mathbb{R}}^{3}$ 
\begin{equation}
\Lambda^{-1}
   |\eta |^{2}\le\eta\cdot\varepsilon\eta\le\Lambda|\eta|^{2},\quad\Lambda^{-1}  |  \eta |^{2}
\le\eta\cdot\mu\eta\le\Lambda|\eta|^{2},\quad0\le\eta\cdot\sigma\eta\le\Lambda|\eta|^{2}\label{eq:Hyp-Ellip}
\end{equation}
almost everywhere in $\Omega$.

As already mentioned, the case of bounded domains was treated by Lassas \cite{lassas-1998} under slightly stronger regularity assumptions; for infinite domains we assume that all the coefficients 
have a `value at infinity' in the precise sense that
\begin{equation}\label{eq:vai}
\lim_{x\to\infty} \mu(x)=\mu_0 I,\qquad
\lim_{x\to\infty} \epsilon(x)=\epsilon_0 I,\qquad
\lim_{x\to\infty} \sigma(x)=\sigma_0 I,\qquad
\end{equation}
for some scalar values $\mu_0>0$, $\epsilon_0>0$ and $\sigma_0\geq 0$. To allow a unified treatment of unbounded and bounded domains, it is
convenient to assign values to $\epsilon_0$, $\mu_0$ and $\sigma_0$ when $\Omega$ is bounded, and we choose
\begin{equation} \label{eq:vaib}\epsilon_0:= 1, \;\; \mu_0 := 1; \;\; \sigma_0 := 0, \;\;\; \mbox{($\Omega$ bounded).} \end{equation}

Several function spaces arise commonly in the study of Maxwell systems; to fix notation,
we denote
\begin{align*} 
   \Hcurl:={}& \{ u\in L^2(\Omega;\C^3 ) :  \curl u  \in L^2(\Omega;\C^3 )     \},  \\  
  \Hdiv:={}& \{ u\in L^2(\Omega;\C^3 ) : \div  u  \in L^2(\Omega)    \}. 
\end{align*}
If $\bo$ is non-empty then we let $\nu$ denote the outward unit normal vector, and define
\[
\Hocurl= \{ u\in \Hcurl :  \nu\times u|_{\bo} =0 \}, 
\]
with the understanding that when $\Omega=\R^3$ then  $\Hocurl = \Hcurl  $.

We start by considering, in the Hilbert space
\begin{equation}
 \Honesq:=   \Hocurl  \oplus    \Hcurl ,   \label{eq:honesq}
 \end{equation}
the operator pencil $\omega\mapsto V_\omega$ defined from (\ref{eq:maxwell})  in the space   
$\Honesq$  by
\begin{equation}\label{eq:maxwell_operator} 
\begin{aligned}
 & V_{\omega}\colon \Honesq\longrightarrow L^{2}(\Omega;\C^{3})^{2},\\
 & (E,H)\longmapsto(\curl H+i(\omega\varepsilon+i\sigma)E,\curl E-i\omega\mu H).   
\end{aligned}
\end{equation}
Our aim is to study the essential spectrum of the pencil $V_\omega$.

\begin{defn}\label{def1}
Let $H_1$ and $H_2$ be two Hilbert spaces. 
For each $\omega\in\C$, let $L_\omega:H_1\to H_2$ be a bounded linear operator. 
We say that $\omega$ lies in the essential spectrum of the pencil $\omega\mapsto L_\omega$  if $0$ lies in the $\sigma_{e,2}$ essential spectrum
of the operator $L_\omega$ as defined in \cite[Ch.\ I, \S 4]{EE87}; explicitly, if $L_\omega$ is not in the class ${\mathcal F}_+$ of semi-Fredholm operators
with finite-dimensional kernel. 
\end{defn}

\begin{remark}\label{rem:essential}\hspace{1cm}
\begin{enumerate}[(a)]
\item \label{r2a} By  \cite[Ch.\ I, Cor.\ 4.7]{EE87}, the statement `$0$ lies in $\sigma_{e,2}(L_\omega)$' is equivalent to the statement that there exists 
a {\em Weyl singular sequence} $(u_n)$ in $H_1$ with
$\| u_n \|_{H_1}=1$ and $ u_n \rightharpoonup 0$ in $H_1$ such that $|| L_\omega u_n ||_{H_2}  \to  0$. 
\item We shall often abuse terminology and say `$\omega$ lies in the essential spectrum of $L_\omega$' or write `$\omega\in\sigma_{ess}(L_\omega)$'. 
\item \label{r2c} In our situation we deal exclusively with densely defined operators having closed range. By \cite[Ch.\ I, Thm.\ 3.7]{EE87} such operators
are semi-Fredholm with finite-dimensional kernel if and only if they are Fredholm; in the terminology of \cite[Ch.\ I, \S 4]{EE87}, the essential spectra
$\sigma_{e,2}$, $\sigma_{e,3}$ and $\sigma_{e,4}$ coincide and we have the following equivalent characterisations of the essential spectrum:
\[
\begin{split}
\omega\in\sigma_{ess}(L_\omega) &\Longleftrightarrow L_\omega \hbox{ is not Fredholm } \\
& \Longleftrightarrow  \hbox{ for every compact  } K, \ L_\omega-K \hbox{ is not invertible. } 
\end{split}
\]
The last equivalence is a classical characterisation of Fredholm operators, see, e.g., Kantorovich and Akilov \cite[Chapter XIII, \S 5]{KA}.
\end{enumerate}
\end{remark}
Finally, we introduce some homogeneous
Sobolev spaces which are required for the Helmholtz decomposition for unbounded domains. For bounded domains these coincide with the usual Sobolev
spaces. 

\begin{defn}\label{mm:beppo}\hspace{1cm}

 (1) ($\Omega$ unbounded)
The homogeneous Sobolev spaces $\dot{H}^1_0(\Omega)$ and $\dot{H}^1(\Omega)$ are the completions of the Schwartz spaces ${\mathcal D}(\Omega)$ 
and ${\mathcal D}(\overline{\Omega})$, respectively, with respect to the norm $\| u \| := \| \nabla u \|_{L^2(\Omega)}$.

(2) ($\Omega$ bounded) In this case we define the homogeneous Sobolev spaces to coincide with the usual Sobolev spaces: $\dot{H}^1_0(\Omega) = H^1_0(\Omega)$ 
and $\dot{H}^1(\Omega) = H^1(\Omega)$.
\end{defn}

\begin{remark} 
\hspace{1cm}
\begin{enumerate}[(a)]
\item
Note that this definition does not coincide with Definition 1.31 in \cite{AMROUCHE-BERNARDI-DAUGE-GIRAULT-1998}, which uses Fourier transforms to define
$\dot{H}^s({\mathbb R}^d)$ and results in spaces which are not complete if $s \geq d/2$. Our definition follows Dautray and Lions \cite{dautray-lions-3}. For clarity, we use our definition
directly in Appendix~\ref{sec:appendixA} below.
\item
If $K$ is any compact subset of $\Omega$ with non-empty interior and $\Omega$ is bounded, then the usual $H^1$-norm is equivalent to
\begin{equation} \| u \|^2 := \| u \|_{L^2(K)}^2 + \| \nabla u \|_{L^2(\Omega)}^2, \label{mm:newnorm} \end{equation}
see Maz'ya \cite{mazya-2011}. In the case when $\Omega$ is unbounded, the norms on $\dot{H}^1$ and $\dot{H}^1_0$ may be shown to be
equivalent to the norm defined in (\ref{mm:newnorm}), for any compact $K\subset\Omega$ with non-empty interior. Thus an equivalent definition 
of $\dot{H}^1(\Omega)$, valid for bounded and unbounded $\Omega$, is the closure of ${\mathcal D}(\Omega)$ in the norm (\ref{mm:newnorm}).
However for unbounded $\Omega$ this is no longer equivalent to the $H^1$-norm; e.g. the function given in polar coordinates by $u(r) = 1/(r+1)^{3/2}$ 
does not lie in $H^1({\mathbb R}^3)$ but lies in $\dot{H}^1({\mathbb R}^3)$.
\end{enumerate}
\end{remark}

We are now ready to state our main result.
\begin{thm} \label{thm:adv}
Let $\Omega\subseteq\R^3$ be a Lipschitz domain satisfying Assumption~\ref{mm:HD} (given below) {and $\epsilon,\sigma,\mu\in L^\infty(\Omega;\R^{3\times 3})$ satisfy \eqref{eq:Hyp-Ellip}, \eqref{eq:vai} if $\Omega$ is unbounded, and \eqref{eq:vaib}} if $\Omega$ is bounded. We have
\[
\sigma_{ess}(V_\omega) = \sigma_{ess}\bigl(\div((\omega\epsilon+i\sigma) \nabla\,\cdot\, )\bigr)\, \bigcup \,\sigma_{ess}(V^0_\omega),
\]
where $\div((\omega\epsilon+i\sigma) \nabla\,\cdot\, )$ acts from $\dot{H}^1_0(\Omega;\C)$ to {its dual} $\dot{H}^{-1}(\Omega;\C)$ and 
$V^0_\omega$ is the Maxwell pencil with constant coefficients $\epsilon_0$, $\mu_0$ and $\sigma_0$.
\end{thm}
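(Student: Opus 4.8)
The plan is to realise $V_\omega$ as an upper-triangular-plus-compact block operator by means of a Helmholtz decomposition of the ambient space $\Honesq$, so that the two factors appearing in the statement become visible as the diagonal blocks, and then to argue that the off-diagonal coupling together with all the inhomogeneities can be absorbed into a relatively compact perturbation, which does not change $\sigma_{e,2}$. Concretely, I would split $E$ and $H$ into their gradient parts and their divergence-free parts: write $E=\nabla p+E^\perp$ with $p\in\dot H^1_0(\Omega;\C)$ and $E^\perp$ in a complementary (divergence-free, with $\nu\times E^\perp=0$ on $\bo$) subspace, and $H=\nabla q+H^\perp$ with $q\in\dot H^1(\Omega;\C)$ and $H^\perp$ divergence-free (no boundary constraint). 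This is the decomposition in the spirit of \cite{alberti-capdeboscq-2016,alberti-maxwell-holder,AC2014} referred to in the introduction; Assumption~\ref{mm:HD} on the topology at infinity is exactly what guarantees the finitely many harmonic fields $\kn$, $\kt$ are finite-dimensional and the decomposition is topologically complete up to a finite-dimensional, hence compact, correction.

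With this splitting the equation $\curl E-i\omega\mu H=0$ projects onto its gradient component, and since $\curl\nabla p=0$ the only surviving term is $-i\omega\mu(\nabla q+H^\perp)$; taking $\div$ and pairing against test functions produces, at leading order, the scalar elliptic pencil $\div(\mu\nabla q\,\cdot)$ — but since $\mu\to\mu_0 I$ at infinity and $\mu_0>0$ is a fixed scalar, this block is invertible modulo compacts and contributes nothing new to the essential spectrum; the genuine scalar obstruction comes from the other equation. Indeed, projecting $\curl H+i(\omega\epsilon+i\sigma)E=0$ onto gradients and taking the divergence yields $\div((\omega\epsilon+i\sigma)\nabla p\,\cdot)$ acting from $\dot H^1_0(\Omega;\C)$ to $\dot H^{-1}(\Omega;\C)$, which is precisely the first set in the theorem. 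The divergence-free components $(E^\perp,H^\perp)$ satisfy a system which, after the gradient unknowns have been eliminated (again up to relatively compact error, using elliptic regularity and the fact that $\curl$ maps the relevant gradient terms to zero), reduces to a Maxwell-type pencil on divergence-free fields; one then shows, using $\epsilon-\epsilon_0 I$, $\mu-\mu_0 I$, $\sigma-\sigma_0 I\to 0$ at infinity together with the local compactness of $\Hocurl\cap\Hdiv\hookrightarrow L^2$ on bounded pieces (Weber/Picard-type compactness), that replacing the variable coefficients by the constants $\epsilon_0,\mu_0,\sigma_0$ is a relatively compact perturbation on the divergence-free block, yielding $\sigma_{ess}(V^0_\omega)$.

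The two inclusions are then obtained by the standard Weyl-sequence machinery of Remark~\ref{rem:essential}\eqref{r2a}. For $\supseteq$: a singular sequence for the scalar pencil $\div((\omega\epsilon+i\sigma)\nabla\cdot)$ lifts to $(\nabla p_n,0)$, and a singular sequence for $V^0_\omega$ is, up to the controlled corrections above, a singular sequence for $V_\omega$; weak convergence to $0$ and normalisation are preserved under the bounded invertible change of variables. For $\subseteq$: given a Weyl sequence $(E_n,H_n)$ for $V_\omega$, decompose it; the scalar parts $(p_n,q_n)$ and divergence-free parts $(E_n^\perp,H_n^\perp)$ are each bounded and weakly null, and since a finite union of closed sets is closed and the off-block and coefficient-difference terms are relatively compact (so tend to $0$ strongly along a subsequence), $\omega$ must lie in the essential spectrum of one of the two diagonal pencils — otherwise both diagonal blocks would be Fredholm and, the perturbation being compact, so would $V_\omega$, by Remark~\ref{rem:essential}\eqref{r2c}.

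The main obstacle, and where the real work lies, is proving that \emph{all} the discarded terms — the off-diagonal couplings between gradient and divergence-free parts, the harmonic-field corrections coming from the topology of $\Omega$, and above all the coefficient differences $\epsilon-\epsilon_0 I$, $\mu-\mu_0 I$, $\sigma-\sigma_0 I$ multiplying curl-type quantities — are genuinely \emph{relatively compact} as perturbations of the (non-elliptic, infinite-kernel) operators in play. This is delicate precisely because, as the introduction stresses, compactly supported coefficient perturbations \emph{do} move the essential spectrum, so one cannot invoke any soft Glazman-type argument; the compactness must be extracted from the decay at infinity in \eqref{eq:vai} combined with interior compact embeddings for $\Hcurl\cap\Hdiv$, and made to interact correctly with the $2\times2$ block structure on the homogeneous Sobolev scale. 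I expect this to consume the bulk of the paper's technical sections, with the Helmholtz decomposition itself (and its validity under Assumption~\ref{mm:HD}) being the second most substantial ingredient.
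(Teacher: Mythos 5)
Your overall strategy — Helmholtz-decompose, identify the scalar divergence pencil and a constant-coefficient block as the two diagonal components, and use Weyl sequences plus Fredholm stability — is indeed the skeleton of the paper's argument. But the central claim on which you hang everything, that \emph{all} the discarded terms are \emph{relatively compact} perturbations, is false, and the paper must and does work around this in two ways that your sketch misses.

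First, the coefficient differences do \emph{not} give compact multiplication operators when they hit the gradient components. The Weber/Picard compactness you invoke, $\Hocurl\cap\Hdiv\hookrightarrow L^2$ locally, is available precisely because $\Psi_E$, $\Psi_H$ carry $L^2$ control on \emph{both} $\curl$ and $\div$; for $\nabla q$ one only has $\curl\nabla q=0$, while $\div\nabla q=\Delta q$ is merely in $\dot H^{-1}$, so there is no compact local embedding. Concretely, $\nabla q\mapsto (\mu-\mu_0I)\nabla q$ is not compact from $\nabla\dot H^1(\Omega)$ to $L^2$: take $q_n(x)=n^{-1}\sin(nx_1)\phi(x)$ with $\phi$ smooth and compactly supported; then $\nabla q_n\rightharpoonup 0$ with $\|\nabla q_n\|_{L^2}$ bounded away from zero, yet $\|(\mu-\mu_0I)\nabla q_n\|_{L^2}$ does not tend to zero unless $\mu\equiv\mu_0I$ on $\supp\phi$. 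This is exactly why the paper splits each coefficient into $\mu_0+\mu_c+\mu_\delta$ (constant $+$ compactly supported $+$ uniformly small), proves compactness only for the compactly supported pieces hitting the $\Psi$-variables (Lemma~\ref{prop:mm4.1}), treats the $\mu_\delta$-type terms as an $O(\delta)$-small operator (Lemma~\ref{lemma:mm4.4}), and then passes to a limit $\delta\to0$ using uniform bounds on the inverse modulo compacts. That limiting step is not cosmetic; it replaces the compactness your proposal presupposes.

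Second, and for the same reason, the off-diagonal coupling in the paper's block form $\bigl(\begin{smallmatrix}\mathcal A_\omega & 0\\ \mathcal C_\omega & \mathcal D_\omega\end{smallmatrix}\bigr)$ is neither compact nor small: $\mathcal C_\omega$ contains $T_N((\mu_0+\mu_c)\nabla q_H)$ and $T_T((\omega(\epsilon_0+\epsilon_c)+i(\sigma_0+\sigma_c))\nabla q_E)$, which act on gradients. What saves the argument is the \emph{triangular} structure itself: if $\mathcal A_\omega+K_1$ and $\mathcal D_\omega+K_2$ are invertible, the explicit $2\times2$ inverse formula shows the block operator is invertible regardless of the (bounded) size of $\mathcal C_\omega$, and conversely one builds Weyl sequences $(0,v_n)$ and $(u_n,-(\mathcal D_\omega+K')^{-1}\mathcal C_\omega u_n)$ to pass singular sequences from a diagonal block to the full pencil. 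Your ``triangular-plus-compact'' framing therefore describes the conclusion you want, not the hypothesis the paper can actually verify. Relatedly, in your $\supseteq$ direction the transfer of a singular sequence for $V^0_\omega$ directly to $V_\omega$ would require $V_\omega-V^0_\omega$ compact, which it is not; the paper instead shows $\sigma_{ess}(V^0_\omega)=\{0,-i\sigma_0/\epsilon_0\}\cup\sigma_{ess}(\mathcal D_\omega)$, observes that $\mathcal D_\omega$ is literally the same operator for $V_\omega$ and $V^0_\omega$, and separately proves $\{0,-i\sigma_0/\epsilon_0\}\subseteq\sigma_{ess}(\mathcal A_\omega)$ using Weyl sequences escaping to infinity.
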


{
Thanks to this result, the essential spectrum of the Maxwell pencil is decomposed into two parts.
\begin{itemize}
\item The essential spectrum of the operator $\div((\omega\epsilon+i\sigma) \nabla\,\cdot\, )$: this component  depends on the coefficients $\epsilon$ and $\sigma$ directly. In particular, in the case when the coefficients $\epsilon$ and $\sigma$ are continuous, 
it consists of the closure of the set of $\omega=i\nu$, $\nu\in\mathbb R$, for which $\nu\varepsilon+\sigma$ is indefinite at some point in $\Omega$:  see 
Proposition~\ref{prop:mm164}.
\item The essential spectrum of the constant coefficient Maxwell pencil: this component  is related to the geometry of $\Omega$, and depends on the coefficients only through their values at infinity. It can be computed explicitly in many cases of interest: we provide  several examples below.
\end{itemize}

In the next examples we will calculate the essential spectrum of $V_\omega^0$, where
\[
V_\omega^0(E,H)= (\curl H+i(\omega\varepsilon_0+i\sigma_0)E,\curl E-i\omega\mu_0 H),
\]
for different choices of domains $\Omega$.

\begin{example}\label{ex:bounded}
The simplest case to consider in the calculation of $\sigma_{ess}(V_\omega^0)$ is when $\Omega$ is bounded. By \eqref{eq:vaib} we have $\epsilon_0=\mu_0=1$ and $\sigma_0=0$. Thus, the pencil is self-adjoint, and we have
\[
\sigma_{ess}(V_\omega^0) = \{0\},
\]
see \cite{lassas-1998,MONK-2003,kirsch-hettlich-2015}. 
\end{example}
}

\begin{example}\label{ex:full}
We consider here the case of the full space $\Omega=\R^3$. 
We can make use of the Fourier transform to obtain a simple expression of this operator. Writing
$
 E(x) = \int_{\R^3}\hat E(\xi) e^{i x\cdot\xi}\,d\xi,
$
the expression of the operator $\curl E$ in the Fourier domain is given by the multiplication operator $iC(\xi)\hat E(\xi)$, where
\begin{equation}\label{eq:C(xi)}
C(\xi) = \begin{pmatrix}
0 & -\xi_3 & \xi_2 \\
\xi_3 & 0 & -\xi_1 \\
-\xi_2 & \xi_1 & 0 
\end{pmatrix}.
\end{equation}
Writing $\curl H$ in a similar way, we immediately see that $V_\omega^0$ is represented, in the Fourier domain, by the multiplication by the matrix
\[
A_\omega(\xi)=\begin{pmatrix}
 i(\omega\epsilon_0+i\sigma_0)I& iC(\xi) \\
iC(\xi) & -i\omega\mu_0 I
\end{pmatrix}.
\]
A direct calculation gives
\[
\det\left(A_\omega(\xi)\right)=k_\omega(|\xi|^2-k_\omega)^2,\qquad k_\omega=\omega\mu_0(\omega\epsilon_0+i\sigma_0).
\]
By a standard argument, we obtain  that $\sigma_{ess}(V_\omega^0)=\{\omega\in\C:\det(A_\omega(\xi))=0 \text{ for some $\xi\in\R^3$}\}$, so that
\[
\sigma_{ess}(V_\omega^0)=\{\omega\in\C: k_\omega\ge 0\}.
\]
In the particular case when the conductivity at infinity is zero, i.e.\ $\sigma_0=0$, we simply have $\sigma_{ess}(V_\omega^0)=\R$.
\end{example}

\begin{example}
Let us look at the case of the slab $\Omega = \{x=(x',x_3)\in\R^3:0<x_3<L\}$, for some $L>0$. The derivation is very similar to the one presented above for the full space, the only difference being that the continuous Fourier transform in the third variable becomes a Fourier series. As a consequence, the continuous variable $\xi_3$ is replaced by a discrete variable $n=0,1,\dots$. More precisely, 
\begin{align*}
E_j(x) &= \sum_{n=1}^\infty \int_{\R^2}\hat E_j(\xi',n) e^{ix'\cdot\xi'}\sin(\frac{n\pi}{L} x_3) d\xi' ,\qquad j=1,2,\\
E_3(x)& = \sum_{n=0}^\infty \int_{\R^2}\hat E_3(\xi',n) e^{ix'\cdot\xi'}\cos(\frac{n\pi}{L} x_3) d\xi' ,
\end{align*}
and, analogously,
\begin{align*}
H_j(x) &= \sum_{n=0}^\infty \int_{\R^2}\hat H_j(\xi',n) e^{ix'\cdot\xi'}\cos(\frac{n\pi}{L} x_3) d\xi' ,\qquad j=1,2,\\
H_3(x)& = \sum_{n=1}^\infty \int_{\R^2}\hat H_3(\xi',n) e^{ix'\cdot\xi'}\sin(\frac{n\pi}{L} x_3) d\xi' ;
\end{align*}
the range of $n$ in each summation has been determined by the boundary conditions on $x_3=0$ and $x_3=L$. Compared to the full space in Example~\ref{ex:full},
the continuous frequency variable $\xi\in\R^3$ has become $\xi:=(\xi',\frac{n\pi}{L})\in\R^2\times(\frac{\pi}{L}\N)$. By calculations similar to those for the full space,
we see that the essential spectrum is the set of $\omega\in\mathbb C$ such that for some $\xi \in\R^2\times(\frac{\pi}{L}\N)$
\[ k_\omega(|\xi|^2-k_\omega)^2 = 0, \qquad k_\omega=\omega\mu_0(\omega\epsilon_0+i\sigma_0); \]
and it is easy to see that this coincides with the essential spectrum for the full space problem. 
\end{example}

\begin{example}
{
We now compute the essential spectrum of
$
V_\omega^0
$
in a cylinder $\Omega = \{x\in\R^3:0<x_2<L_1,\,0<x_3<L_2\}$. As above, let us expand $E$ and $H$ in Fourier coordinates as
\begin{align*}
E_1(x_1,x_2,x_3) & = \sum_{n\in\N^2}\int_\R \hat E_1(n,\xi)\sin\Bigl(\frac{\pi n_1}{L_1}x_2\Bigr)\sin\Bigl(\frac{\pi n_2}{L_2}x_3\Bigr)e^{i\xi x_1}\,d\xi, \\
E_2(x_1,x_2,x_3) & = \sum_{n\in\N^2}\int_\R \hat E_2(n,\xi)\cos\Bigl(\frac{\pi n_1}{L_1}x_2\Bigr)\sin\Bigl(\frac{\pi n_2}{L_2}x_3\Bigr)e^{i\xi x_1}\,d\xi, \\
E_3(x_1,x_2,x_3) & = \sum_{n\in\N^2}\int_\R \hat E_3(n,\xi)\sin\Bigl(\frac{\pi n_1}{L_1}x_2\Bigr)\cos\Bigl(\frac{\pi n_2}{L_2}x_3\Bigr)e^{i\xi x_1}\,d\xi
\end{align*}
and
\begin{align*}
H_1(x_1,x_2,x_3) & = \sum_{n\in\N^2}\int_\R \hat H_1(n,\xi)\cos\Bigl(\frac{\pi n_1}{L_1}x_2\Bigr)\cos\Bigl(\frac{\pi n_2}{L_2}x_3\Bigr)e^{i\xi x_1}\,d\xi, \\
H_2(x_1,x_2,x_3) & = \sum_{n\in\N^2}\int_\R \hat H_2(n,\xi)\sin\Bigl(\frac{\pi n_1}{L_1}x_2\Bigr)\cos\Bigl(\frac{\pi n_2}{L_2}x_3\Bigr)e^{i\xi x_1}\,d\xi, \\
H_3(x_1,x_2,x_3) & = \sum_{n\in\N^2}\int_\R \hat H_3(n,\xi)\cos\Bigl(\frac{\pi n_1}{L_1}x_2\Bigr)\sin\Bigl(\frac{\pi n_2}{L_2}x_3\Bigr)e^{i\xi x_1}\,d\xi.
\end{align*}
In order to guarantee uniqueness of the expansions, set
\begin{equation}\label{eq:constraints}
\begin{aligned}
\hat H_2(0,n_2,\xi)&=0,&
\hat H_3(n_1,0,\xi)&=0,&
\hat E_1(0,n_2,\xi)&=0,\\
\hat E_1(n_1,0,\xi)&=0,&
\hat E_2(n_1,0,\xi)&=0,&
\hat E_3(0,n_2,\xi)&=0,
\end{aligned}
\end{equation}
for every $n\in\N^2$ and $\xi\in\R$. 
}

{
A direct calculation gives that  the operators $E\mapsto \curl E$ and $H\mapsto\curl H$ may be written in Fourier coordinates as the multiplication operators by the matrices
\[
C(i\xi,\tfrac{\pi }{L_1}n_1,\tfrac{\pi }{L_2}n_2)\qquad 
\text{and}
\qquad
C(i\xi,-\tfrac{\pi }{L_1}n_1,-\tfrac{\pi }{L_2}n_2),
\]
respectively, where the matrix $C$ is defined in \eqref{eq:C(xi)}. As a consequence, in the Fourier domain, $V_\omega^0$ is a multiplication operator represented by the matrix
\[
A_\omega(n,\xi)=\begin{pmatrix}
 i(\omega\epsilon_0+i\sigma_0)I& C(i\xi,-\tfrac{\pi }{L_1}n_1,-\tfrac{\pi }{L_2}n_2) \\
C(i\xi,\tfrac{\pi }{L_1}n_1,\tfrac{\pi }{L_2}n_2) & -i\omega\mu_0 I
\end{pmatrix}.
\]
A further calculation yields
\[
\det\left(A_\omega(n,\xi)\right)=k_\omega(\xi^2+\tfrac{\pi ^2}{L_1^2}n_1^2+\tfrac{\pi ^2}{L_2^2}n_2^2-k_\omega)^2,\qquad k_\omega=\omega\mu_0(\omega\epsilon_0+i\sigma_0).
\]
If $\omega$ is such that $\det\left(A_\omega(n,\xi)\right)\neq 0$ for every $n\in\N^2$ and $\xi\in\R$, then $\omega$ does not belong to the essential spectrum of $V^0_\omega$. On the other hand, suppose that $\omega$ is such that $\det\left(A_\omega(n,\xi)\right) = 0$ for some $n\in\N$ and $\xi\in\R$. If $n_1=n_2=0$, it is easy to see that there are no nonzero elements of $\ker A_\omega(n,\xi)$ satisfying \eqref{eq:constraints}. On the other hand, the vector $(0,\omega\mu_0 L_2,0,\pi i,0,\xi L_2)$ belongs to $\ker A_\omega(0,1,\xi)$ and  satisfies \eqref{eq:constraints} (and similarly if $n_1=1$ and $n_2=0$). As a consequence, we have that
\[
\sigma_{ess}(V^0_\omega)=\{\omega\in\C:k_\omega=0 \text{ or }k_\omega\ge \tfrac{\pi^2}{L^2}\},\qquad L = \max(L_1,L_2).
\]
In the particular case when $\sigma_0=0$, this set takes the simpler form
\[
\sigma_{ess}(V_\omega^0) = (-\infty,-\frac{\pi}{L\sqrt{\epsilon_0\mu_0}}\Big]\cup\{0\}\cup \Big[\frac{\pi}{L\sqrt{\epsilon_0\mu_0}},+\infty).
\]
Note that this set approaches the essential spectrum for the slab as $L\to +\infty$. This is expected: as $L$ increases the cylinder  becomes larger and larger in one direction. 
}
\end{example}

\section{Helmholtz decomposition and related operators}
\label{section:3}
We shall treat both bounded and unbounded Lipschitz domains $\Omega\subseteq\R^3$. The latter are our primary interest, as the bounded case has already been
studied by Lassas \cite{lassas-1998}, albeit under slightly stronger assumptions on the boundary regularity.  However, in the definitions which follow, we deal with both cases. 

The first decomposition result which we require is true without restrictions on the topology of $\Omega$. Although it is standard, we present a proof
since it shows how the homogeneous Sobolev spaces arise in a natural way.
\begin{lem}\label{lemma:mm1} Let $\Omega\subseteq\R^3$ be a Lipschitz domain.
\begin{enumerate}
\item The space $L^2(\Omega;{\mathbb C}^3)$ admits the following orthogonal decompositions:
\begin{subequations}\label{mm:1011}
\begin{align}
 L^2(\Omega;{\mathbb C}^3) &= \nabla \dot{H}_0^1(\Omega)\oplus \Hdivzero, \label{mm:10} \\
 L^2(\Omega;{\mathbb C}^3)  &= \nabla \dot{H}^1(\Omega)\oplus \Hdivzerozero, \label{mm:11} 
\end{align}
\end{subequations}
in which 
\begin{align*}
 \Hdivzero &=  \left\{ u \in L^2(\Omega;{\mathbb C}^3) \; | \; \div u = 0 \right\}, \\
 \Hdivzerozero &=  \left\{ u \in L^2(\Omega;{\mathbb C}^3) \; | \; \div u = 0, \;\; \left. \nu\cdot u\right|_{\partial\Omega}= 0\right\}.
\end{align*}
\item The spaces $\Hocurl$ 
and $\Hcurl$ admit the orthogonal decompositions
\begin{subequations}\label{mm:1213}
\begin{align}
 \Hocurl &= \nabla \dot{H}^1_0(\Omega;{\mathbb C}) \oplus (\Hocurl\cap \Hdivzero), \label{mm:12} \\
\Hcurl &= \nabla \dot{H}^1(\Omega;{\mathbb C}) \oplus (\Hcurl\cap \Hdivzerozero). \label{mm:13}
\end{align}
\end{subequations}
\end{enumerate}
\end{lem}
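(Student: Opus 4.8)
The plan is to prove the two orthogonal decompositions of $L^2(\Omega;\C^3)$ in part~(1) directly, and then to obtain part~(2) by intersecting them with the curl spaces.

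For part~(1), the point the authors flag — that the homogeneous Sobolev spaces appear naturally — is that $u\mapsto\nabla u$ is essentially an isometry from the relevant homogeneous Sobolev space onto its range in $L^2(\Omega;\C^3)$. When $\Omega$ is unbounded, the norm on $\dot H^1_0(\Omega)$ (resp.\ $\dot H^1(\Omega)$) is by definition $\|\nabla u\|_{L^2(\Omega)}$, so the continuous extension of $\nabla$ to the completion is an isometry. When $\Omega$ is bounded, one uses the Poincar\'e inequality on $H^1_0(\Omega)$ and the Deny--Lions (Poincar\'e--Wirtinger) inequality on $H^1(\Omega)$ to see that $\|\nabla u\|_{L^2}$ is an equivalent norm, on $H^1_0(\Omega)$ and on $H^1(\Omega)$ after quotienting by constants respectively. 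In every case $\nabla\dot H^1_0(\Omega)$ and $\nabla\dot H^1(\Omega)$ are therefore complete, hence closed, subspaces of $L^2(\Omega;\C^3)$, so $L^2(\Omega;\C^3)$ splits as the orthogonal sum of each with its orthogonal complement; it remains to identify the complements. A field $v$ is orthogonal to $\nabla\dot H^1_0(\Omega)$ precisely when $\int_\Omega v\cdot\overline{\nabla\phi}=0$ for every $\phi\in\mathcal D(\Omega)$ (by density of $\mathcal D(\Omega)$ in $\dot H^1_0(\Omega)$), i.e.\ exactly when $\div v=0$ in $\mathcal D'(\Omega)$; this is (\ref{mm:10}). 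Orthogonality to $\nabla\dot H^1(\Omega)$ means the same holds with $\phi$ ranging over a dense set of functions smooth up to the boundary — namely $\mathcal D(\overline\Omega)$, or $C^\infty(\overline\Omega)$ if $\Omega$ is bounded; testing against $\phi\in\mathcal D(\Omega)$ again gives $\div v=0$, so $v\in\Hdiv$, and then the normal trace theorem, the Green formula $\int_\Omega v\cdot\nabla\overline\phi=\langle\nu\cdot v,\overline\phi\rangle_{\partial\Omega}$, and density of boundary traces in $H^{1/2}(\partial\Omega)$ force $\nu\cdot v|_{\partial\Omega}=0$; this is (\ref{mm:11}).

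For part~(2) I would first record the inclusions $\nabla\dot H^1_0(\Omega)\subseteq\Hocurl$ and $\nabla\dot H^1(\Omega)\subseteq\Hcurl$: indeed $\curl\nabla u=0$, and for $u\in\dot H^1_0(\Omega)$, approximating by $u_n\in\mathcal D(\Omega)$ — whose gradients have vanishing tangential trace — and using continuity of the tangential trace on $\Hcurl$ gives $\nu\times\nabla u|_{\partial\Omega}=0$. Now take $w\in\Hocurl$ and write $w=\nabla u+v$ with $u\in\dot H^1_0(\Omega)$ and $v\in\Hdivzero$, using (\ref{mm:10}); then $\curl v=\curl w\in L^2(\Omega;\C^3)$ and $\nu\times v=\nu\times w-\nu\times\nabla u=0$, so $v\in\Hocurl\cap\Hdivzero$. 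The two summands are orthogonal in the $\Hcurl$ inner product, since $\langle\nabla u,v\rangle_{L^2}=0$ and $\curl\nabla u=0$, and their intersection is trivial by the $L^2$-orthogonality already used; this yields (\ref{mm:12}). The decomposition (\ref{mm:13}) is obtained identically, starting from (\ref{mm:11}) and using $\Hdivzerozero$ in place of $\Hdivzero$.

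Most of this is routine; the one step that needs genuine care is the identification of the orthogonal complement in (\ref{mm:11}), which requires that fields in $\Hdiv$ possess a normal trace in $H^{-1/2}(\partial\Omega)$, that the Green formula holds at that level of generality, and that traces of smooth functions are dense in $H^{1/2}(\partial\Omega)$ — this is where the Lipschitz regularity of $\partial\Omega$ enters. The other place the two cases diverge is the closedness of $\nabla\dot H^1(\Omega)$, which is immediate from the definition of the completion when $\Omega$ is unbounded but needs the Deny--Lions inequality when $\Omega$ is bounded.
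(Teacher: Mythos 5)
Your proposal is correct and follows essentially the same route as the paper: closedness of $\nabla\dot H^1_0(\Omega)$ and $\nabla\dot H^1(\Omega)$ in $L^2$, identification of the orthogonal complements by testing against $\mathcal D(\Omega)$ (to get $\div v=0$) and $\mathcal D(\overline\Omega)$ (to get, via the Green formula and the normal trace, $\nu\cdot v=0$), and then intersection with the curl spaces for part (2). You fill in a few details the paper leaves tacit — notably the Deny–Lions argument for closedness of $\nabla\dot H^1(\Omega)$ when $\Omega$ is bounded, the tangential-trace argument for the inclusion $\nabla\dot H^1_0(\Omega)\subseteq\Hocurl$, and the $\Hcurl$-orthogonality in part (2) — but these are routine and do not change the structure of the argument.
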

\begin{proof}
(1) The operator $\nabla\colon \dot{H}^1_0(\Omega)\to L^2(\Omega;{\mathbb C}^3)$ is an isometry, and so $\nabla\dot{H}^1_0(\Omega)$ is closed in $L^2(\Omega;{\mathbb C}^3)$. It remains to prove that $(\nabla\dot{H}^1_0(\Omega))^\perp = \Hdivzero$. Suppose that $\phi\in (\nabla\dot{H}^1_0(\Omega))^\perp$; then $\langle \phi,\nabla v \rangle = 0$ for all $v\in {\mathcal D}(\Omega)$, 
which means that $\langle \div\phi,v \rangle = 0$ for all $v \in {\mathcal D}(\Omega)$. This proves that $\phi \in \Hdivzero$. 
Conversely, if $\phi \in \Hdivzero$ then for any $v\in {\mathcal D}(\Omega)$ we have $0=\langle \div\phi,v\rangle = \langle \phi, \nabla v\rangle$.
Taking the closure in the $\dot{H}^1_0(\Omega)$-topology shows that $\langle \phi,\nabla v \rangle = 0$ for all $v\in \dot{H}^1_0(\Omega)$,
which proves (\ref{mm:10}).

Analogously, $\nabla\dot{H}^1(\Omega)$ is closed in $L^2(\Omega;{\mathbb C}^3)$. To prove (\ref{mm:11}) suppose that $\phi\in (\nabla\dot{H}^1(\Omega))^\perp$; then certainly $\div\phi=0$ since $(\nabla\dot{H}^1(\Omega))^\perp \subseteq (\nabla\dot{H}^1_0(\Omega))^\perp$.
Thus for all $v\in {\mathcal D}(\overline\Omega)$, we have $0=\langle \phi,\nabla v\rangle = \int_{\partial\Omega}(\nu\cdot\phi)\overline{v}\,ds$. This means that $\nu\cdot\phi=0$ on
$\partial \Omega$ and so $\phi\in \Hdivzerozero$. The proof that any $\phi\in\Hdivzerozero$ lies in $(\nabla\dot{H}^1(\Omega))^\perp$ is straightforward.

(2) The decompositions \eqref{mm:1213} follow immediately from \eqref{mm:1011} by taking the appropriate subspaces.
\end{proof}

To decompose the Maxwell pencil we need to decompose the spaces $\Hdivzero$ and $\Hdivzerozero$ further, by using vector potentials in some suitable spaces, which we now introduce.
\begin{defn}\label{def:11}
Let $\Omega\subseteq\R^3$ be a Lipschitz domain. 
\begin{itemize}
\item The space $\dot X_T(\Omega)$ is the closure of $ \Hcurl \cap \Hdivzerozero$ with respect to the seminorm $\| u \| := \| \curl u \|_{L^2(\Omega)} {+\| \div u \|_{L^2(\Omega)}+\|u\cdot\nu \|_{H^{-1/2}(\partial\Omega)}}$.
\item The space $\dot X_N(\Omega)$ is the closure of $\Hocurl \cap \Hdivzero$ with respect to the seminorm $\| u \| := \| \curl u \|_{L^2(\Omega)}{+\| \div u \|_{L^2(\Omega)}+\|u\times\nu \|_{H^{-1/2}(\partial\Omega)}}$.
\item The space $\kt$ is the kernel of the curl operator restricted to $\dot X_T(\Omega)$, namely
\[
\kt =  \{u\in  \dot X_T(\Omega):\curl u=0\}.
\]
\item The space $\kn$ is the kernel of the curl operator restricted to $\dot X_N(\Omega)$, namely
\[
\kn =  \{u\in  \dot X_N(\Omega):\curl u=0\}.
\]
\end{itemize}
\end{defn} 

The spaces  $\kt$ and  $\kn$ are closed in $\dot X_T(\Omega)$ and in  $\dot X_N(\Omega)$, respectively, and so we can consider the quotient spaces
\[
\dot X_T(\Omega)/\kt,\qquad \dot X_N(\Omega)/\kn.
\]
The $\curl$ operator is well-defined and injective on these spaces. To avoid cumbersome notation, we will in the following identify $\curl \psi$ for $\psi\in \dot X_T(\Omega)/\kt$ or $\psi\in \dot X_N(\Omega)/\kn$ with the vector in $L^2(\Omega;{\mathbb C}^3)$ given by $\curl$ acting on any representative of the equivalence class $\psi$. The $\curl$ operator maps these quotient spaces into the space of divergence free fields, with appropriate boundary conditions.
\begin{lem}\label{lem:closed}
Let $\Omega\subseteq\R^3$ be a Lipschitz domain.
\begin{enumerate}
\item The space $\curl(\dot X_T(\Omega)/\kt)$ is contained in $\Hdivzero$.
\item The space $\curl(\dot X_N(\Omega)/\kn)$ is contained in $\Hdivzerozero$.
\end{enumerate}
\end{lem}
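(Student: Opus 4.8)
The plan is to reduce both statements to the orthogonal decompositions \eqref{mm:10}--\eqref{mm:11} of Lemma~\ref{lemma:mm1}, which identify $\Hdivzero$ with the orthogonal complement of $\nabla\dot{H}^1_0(\Omega)$ in $L^2(\Omega;\C^3)$, and $\Hdivzerozero$ with the orthogonal complement of $\nabla\dot{H}^1(\Omega)$. Fix $\psi$ in $\dot X_T(\Omega)/\kt$ (resp.\ in $\dot X_N(\Omega)/\kn$) and set $w:=\curl\psi\in L^2(\Omega;\C^3)$, which is well defined by the convention fixed above, since $\|\curl\,\cdot\,\|_{L^2(\Omega)}$ is one of the terms of the seminorm of Definition~\ref{def:11} defining $\dot X_T(\Omega)$ (resp.\ $\dot X_N(\Omega)$); in particular $\curl$ extends to a bounded operator from the completion into $L^2(\Omega;\C^3)$ which, as $\kt$ (resp.\ $\kn$) is its kernel there, factors through the quotient. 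Thus it suffices to show $\langle w,\nabla\phi\rangle_{L^2(\Omega)}=0$ for every $\phi\in\dot{H}^1_0(\Omega)$ in case (1), and for every $\phi\in\dot{H}^1(\Omega)$ in case (2); by Definition~\ref{mm:beppo} and continuity of the inner product it is enough to test against $\phi\in\mathcal{D}(\Omega)$, respectively $\phi\in\mathcal{D}(\overline\Omega)$.

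First I would check this orthogonality for a representative $u$ lying in the dense subspace of each completion, i.e.\ $u\in\Hcurl\cap\Hdivzerozero$ in case (1) and $u\in\Hocurl\cap\Hdivzero$ in case (2). In case (1), for $\phi\in\mathcal{D}(\Omega)$ the field $\nabla\phi$ is a compactly supported smooth test field in the open set $\Omega$, so the definition of the distributional curl gives $\langle\curl u,\nabla\phi\rangle=\langle u,\curl\nabla\phi\rangle=0$, with no boundary contribution; notice this argument uses no boundary information, consistently with the fact that $\Hdivzero$ carries no boundary condition. In case (2), for $\phi\in\mathcal{D}(\overline\Omega)$ we have $\nabla\phi\in H^1(\Omega;\C^3)$, and Green's formula for the curl on the Lipschitz domain $\Omega$ (with tangential trace in $H^{-1/2}(\partial\Omega)$) gives $\langle\curl u,\nabla\phi\rangle=\langle u,\curl\nabla\phi\rangle+\langle\nu\times u,\nabla\phi\rangle_{\partial\Omega}$; the first term vanishes because $\curl\nabla\phi=0$, and the boundary pairing vanishes because $u\in\Hocurl$ means precisely $\nu\times u=0$ on $\partial\Omega$. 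In both cases $w=\curl u$ is orthogonal to the relevant space of gradients.

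Finally I would pass to the completion. An arbitrary $\psi$ is represented by a sequence $(u_n)$ in $\Hcurl\cap\Hdivzerozero$ (resp.\ $\Hocurl\cap\Hdivzero$) which is Cauchy in the defining seminorm, whence $\curl u_n\to w$ in $L^2(\Omega;\C^3)$; passing to the limit in the orthogonality relations established above yields $w\perp\nabla\dot{H}^1_0(\Omega)$ (resp.\ $w\perp\nabla\dot{H}^1(\Omega)$), and Lemma~\ref{lemma:mm1} then gives $w\in\Hdivzero$ (resp.\ $w\in\Hdivzerozero$), as claimed.

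The only point requiring genuine care is the boundary term in case (2): one must invoke the tangential-trace Green identity at the regularity level of $\Hcurl$ on a Lipschitz boundary — a classical fact, obtained for instance via density of smooth vector fields together with boundedness of the tangential trace $\Hcurl\to H^{-1/2}(\partial\Omega)$ — and then use the defining property $\nu\times u=0$ of $\Hocurl$ inside the corresponding duality pairing. Everything else is routine bookkeeping with the completions, the continuous extension of $\curl$, and the decompositions of Lemma~\ref{lemma:mm1}.
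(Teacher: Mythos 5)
Your proof is correct, but it takes a genuinely different route from the paper's, which is far more terse: the paper simply observes that $\div\circ\curl=0$ handles part (1), and for part (2) invokes the surface-divergence identity $(\curl u)\cdot\nu=\div_{\partial\Omega}(u\times\nu)$ on $\partial\Omega$ (cited from Monk), so that $\nu\times u=0$ forces $(\curl u)\cdot\nu=0$. You instead reduce both assertions to orthogonality against gradient fields via the decompositions \eqref{mm:10}--\eqref{mm:11} of Lemma~\ref{lemma:mm1}, and verify the orthogonality by the curl Green formula on a Lipschitz domain, with the boundary term killed by the $\Hocurl$ condition. The two proofs are essentially dual presentations of the same fact -- your weak pairing $\langle\nu\times u,\nabla\phi\rangle_{\partial\Omega}$ is exactly what the surface-divergence identity expresses in trace form -- but yours buys self-containedness (it avoids the citation of the pointwise boundary identity and leans only on the Helmholtz decomposition already established in the paper), while the paper's is shorter and makes the normal-trace vanishing visible directly. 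A further merit of your write-up is that it spells out the passage to the completion $\dot X_T$, $\dot X_N$, which the paper's one-line proof silently absorbs; this is harmless since $\curl$ is continuous for the defining seminorm, but it is good practice to record it, as you do.
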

\begin{proof}
Part (1) follows immediately from  $\div\circ\curl=0$. Part (2) follows from
 the identities $\div\circ\curl=0$ and $(\curl u)\cdot\nu=\div_{\partial\Omega}(u\times\nu)$ on $\partial\Omega$  \cite[(3.52)]{MONK-2003}.
\end{proof}

We make the following assumption.
\begin{assumption} \label{mm:HD}
The spaces $\kt$ and $\kn$ are finite-dimensional and
\begin{subequations}\label{mm:1516}
\begin{align}
\Hdivzero &= \curl (\dot X_T(\Omega)/\kt) \oplus \kn, \label{mm:15} \\
 \Hdivzerozero &= \curl(\dot X_N(\Omega)/\kn) \oplus \kt.\label{mm:16}
\end{align}
\end{subequations}
\end{assumption}

This assumption is verified in many cases of theoretical and practical interest.
\begin{prop}\label{prop:mm9}
Assumption \ref{mm:HD} is verified in any of the following cases:
\begin{enumerate}
\item $\Omega = \R^3$ {(with $\kt=\kn=\{0\}$)};
\item $\Omega$ is a bounded Lipschitz domain, satisfying Hypothesis 3.3 of \cite{AMROUCHE-BERNARDI-DAUGE-GIRAULT-1998};
\item $\Omega$ is a $C^2$ exterior domain, satisfying  assumptions (1.45) of \cite[Chapter IXA]{dautray-lions-3};
\item $\Omega$ is the half space $\{(x_1,x_2,x_3)\in\R^3:x_3>0\}$ {(with $\kt=\kn=\{0\}$)};
\item $\Omega$ is the slab $\{(x_1,x_2,x_3)\in\R^3:0<x_3<L\}$ for some $L>0$ (with $\kt=\kn=\{0\}$);
\item $\Omega$ is a cylinder $\R\times\Omega'$, where $\Omega'\subseteq\R^2$ is a  simply connected bounded domain of class 
$C^{1,1}$ or piecewise smooth with no re-entrant corners  (with $\kt=\kn=\{0\}$).
\end{enumerate}
\end{prop}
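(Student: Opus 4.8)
The plan is to verify Assumption~\ref{mm:HD} separately in each of the six cases, in every instance reducing it to known vector-potential theory. The common reduction is this: by Lemma~\ref{lem:closed} the ranges $\curl(\dot X_T(\Omega)/\kt)$ and $\curl(\dot X_N(\Omega)/\kn)$ already lie in $\Hdivzero$ and $\Hdivzerozero$, and $\curl$ is injective on these quotients; moreover $\kn$ and $\kt$ will be finite-dimensional, hence closed and complemented. Therefore \eqref{mm:15} (resp.\ \eqref{mm:16}) is equivalent to the conjunction of (i) directness, $\curl(\dot X_T(\Omega)/\kt)\cap\kn=\{0\}$ (resp.\ $\curl(\dot X_N(\Omega)/\kn)\cap\kt=\{0\}$), with (ii) surjectivity: every field in $\Hdivzero$ (resp.\ $\Hdivzerozero$) is the curl of an element of $\dot X_T(\Omega)$ (resp.\ $\dot X_N(\Omega)$) up to an element of $\kn$ (resp.\ $\kt$). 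Thus the whole assertion is an existence-and-uniqueness statement for vector potentials in the spaces of Definition~\ref{def:11}, together with a dimension count for the harmonic fields $\kt,\kn$.

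For a bounded Lipschitz domain satisfying Hypothesis~3.3 of \cite{AMROUCHE-BERNARDI-DAUGE-GIRAULT-1998} (case (2)), the Friedrichs/Gaffney-type inequalities available under that hypothesis show that on $\Hcurl\cap\Hdivzerozero$ the seminorm of Definition~\ref{def:11} is, modulo its kernel, equivalent to the graph norm; hence the seminormed completion $\dot X_T(\Omega)$ may be identified with the classical space of $L^2$ divergence-free fields with $\curl$ in $L^2$ and vanishing normal trace, $\kt$ being the space of harmonic vector fields tangent to $\bo$, and symmetrically for $\dot X_N(\Omega)$ and $\kn$. These harmonic spaces are finite-dimensional with dimensions the Betti numbers of $\Omega$, and \eqref{mm:15}--\eqref{mm:16} are precisely the second (vector) potential decompositions proved there. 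Case (3), a $C^2$ exterior domain, is handled the same way, now invoking \cite[Chapter~IXA]{dautray-lions-3}: under assumptions (1.45) the weighted homogeneous spaces of that chapter coincide, after completion, with our $\dot X_T(\Omega)$ and $\dot X_N(\Omega)$, and its exterior vector-potential theorems supply both the surjectivity and the finite-dimensional cohomology.

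The remaining cases (1), (4), (5), (6) are topologically trivial, so the claim is that $\kt=\kn=\{0\}$ and that $\curl\colon\dot X_T(\Omega)\to\Hdivzero$ and $\curl\colon\dot X_N(\Omega)\to\Hdivzerozero$ are onto. For $\Omega=\R^3$ this is an explicit Fourier computation: given $u\in\Hdivzero$, set $\hat A(\xi)=i\,\xi\times\hat u(\xi)/|\xi|^2$, so that $\curl A=u$, $\div A=0$ and $A\in\dot H^1(\R^3;\C^3)$, i.e.\ $A$ represents an element of $\dot X_T(\R^3)=\dot X_N(\R^3)$; triviality of $\kt$ follows since a curl-free element of the completion has zero seminorm, and $\R^3$ carries no nonzero $L^2$ harmonic vector field. (As this is where the homogeneous Sobolev spaces of Definition~\ref{mm:beppo} enter, it is natural to record the details in Appendix~\ref{sec:appendixA}.) The half-space (case (4)) follows from $\R^3$ by reflection: a divergence-free field with vanishing normal component on $\{x_3=0\}$ extends, by odd reflection of $u_3$ and even reflection of $u_1,u_2$, to a divergence-free field on $\R^3$; applying the $\R^3$ result and restricting, after a gauge correction, produces a potential in $\dot X_T$, while the opposite parities handle $\dot X_N$. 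The slab (case (5)) is the same reflection/periodisation in $x_3$, equivalently the mode-by-mode solution of the algebraic systems written out in the Examples of Section~2, with a check of membership in $\dot X_T(\Omega)$ and $\dot X_N(\Omega)$. For the cylinder $\R\times\Omega'$ (case (6)) one Fourier-transforms in $x_1$, reducing each frequency to a planar div-curl problem on $\Omega'$; simple connectedness and the regularity of $\Omega'$ give the two-dimensional compact embedding and trivial planar cohomology, hence unique solvability of the planar potential problem with frequency-uniform bounds, which one reassembles into a global potential of the required class.

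The main obstacle is the bookkeeping in the unbounded cases: one must check that the potentials produced by the Fourier and reflection constructions actually belong to the seminormed completions $\dot X_T(\Omega)$ and $\dot X_N(\Omega)$ (i.e.\ that they are limits in the curl-seminorm of genuine $L^2$ divergence-free fields with the correct trace) rather than being merely distributions with $L^2$ curl; and one must establish directness of the sums in \eqref{mm:1516}, which in these topologically trivial unbounded geometries reduces to showing that a field which is simultaneously a curl of a $\dot X$-element and an $L^2$ harmonic field must vanish. For the cylinder there is the further difficulty of patching the frequency-wise planar potentials into a single field with the right decay, and it is exactly here that the regularity of $\Omega'$ is used, via uniform (in the Fourier parameter) planar elliptic estimates.
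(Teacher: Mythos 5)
Your structural reduction -- finite-dimensionality of $\kt,\kn$, directness, and surjectivity of $\curl$ onto the appropriate divergence-free space -- is the right frame, and for cases (2) and (3) you cite exactly the sources the paper uses. For $\R^3$ your formula $\hat A(\xi)=i\,\xi\times\hat u(\xi)/|\xi|^2$ is equivalent to the identity $\hat u=-\xi\times(\xi\times\hat u/|\xi|^2)$ invoked in the paper. Where you diverge is in the treatment of the half-space and slab: you propose reflection and periodisation into $\R^3$, whereas the paper treats (1), (4), (5) and (6) uniformly as cylinders $\R\times\Omega'$ (with $\Omega'=\R^2$, a half-plane, a strip, or a bounded section) via a one-variable Fourier transform and a 2D div/curl problem on $\Omega'$ controlled by Assumption~\ref{ch1}; this unification is what Appendix~\ref{sec:appendixA} supplies. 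Your reflection idea would in fact work for the half-space (the parities align so no ``gauge correction'' is even needed: for $\Hdivzerozero$ extend $f_3$ oddly and $f_1,f_2$ evenly, and the resulting $\psi_1,\psi_2$ come out odd in $x_3$, giving $\psi\times\nu=0$ automatically; mutatis mutandis for $\Hdivzero$), but it does not extend cleanly to (6), so some version of the planar div/curl estimate is needed anyway.

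The genuine gap is the one you flag yourself but do not close: showing that the potential $\psi$ produced by the Fourier or reflection construction actually represents an element of the seminormed completion $\dot X_T(\Omega)$ or $\dot X_N(\Omega)$ -- i.e. that it is a limit in the seminorm of honest $L^2$ fields in $\Hcurl\cap\Hdivzerozero$ (resp.\ $\Hocurl\cap\Hdivzero$) -- and, as part of the same issue, that $\psi\in\dot H^1(\Omega)$ in the sense of Definition~\ref{mm:beppo}. This is not bookkeeping: on unbounded $\Omega$ the candidate $\psi$ is typically not in $L^2$, and the passage from ``$\psi$ is a distribution with $\curl\psi=f\in L^2$, $\div\psi=0$, and the right trace'' to ``$\psi\in\dot X_T(\Omega)$'' is precisely what Step~(3) of the proof of Proposition~\ref{prop:cylinders} is for. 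The paper obtains the gradient estimates \eqref{c7} and \eqref{c5mm3} uniformly in the Fourier parameter, and then regularises by the low-frequency cutoff $\hat\psi_{i,\epsilon}=\frac{|\xi|}{|\xi|+\epsilon}\hat\psi_i$ so that each $\psi_{i,\epsilon}$ lies in $H^1(\Omega)$ and converges to $\psi_i$ in $\dot H^1(\Omega)$ by dominated convergence. Without some such device (for the reflection route you would need an analogous low-frequency regularisation on $\R^3$ before restricting), the proof of cases (1), (4), (5), (6) is not complete. A secondary gap: for the slab you write that the construction is ``equivalently the mode-by-mode solution of the algebraic systems written out in the Examples of Section~2,'' but those Examples compute $\sigma_{ess}(V^0_\omega)$, not a Helmholtz decomposition, so that equivalence would itself need an argument.
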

\begin{remark}
We have decided not to provide the details of the assumptions of parts (2) and (3), since they are rather lengthy and are not needed for the rest of the paper. In simple words, these assumptions require $\partial\Omega$ to be a finite union of  connected  surfaces and that there exist a finite number of \emph{cuts} within $\Omega$ which divide it into multiple simply connected domains. The number of cuts is given by $\dim\kt $, and the number of connected components of $\partial\Omega$ by $\dim\kn+1$.  Thus, for simply-connected domains with connected boundaries the decomposition is even
simpler: $\kt$ and $\kn$ are trivial and can be omitted.
\end{remark}
\begin{proof}
(1) The decompositions \eqref{mm:15} and \eqref{mm:16} coincide, and simply follow from the identity $\hat u(\xi) =- \xi\times(\frac{\xi\times \hat u}{|\xi|^2})$, valid for every divergence-free field $u$ (which implies $\xi\cdot\hat u = 0$), where $\hat u$ denotes the Fourier transform of  $u$. Alternatively, this is also a consequence of Proposition~\ref{prop:cylinders} and Lemma~\ref{lem:cylinders}.

(2) This part is proved in \cite{AMROUCHE-BERNARDI-DAUGE-GIRAULT-1998} (see also  \cite[Chapter IXA]{dautray-lions-3} and \cite[Chapter I, $\S$3]{GIRAULT-RAVIART-1986} for the smooth case). The construction of the spaces $\kt$ and $\kn$ is described explicitly.

(3) The decompositions in this case are proved in \cite[Chapter IXA]{dautray-lions-3}.

(4)-(5)-(6) The arguments are standard and explicit, but it is not easy to find precise statements in the literature. We detail the derivation in Appendix~\ref{sec:appendixA}, which contains a general construction for a larger class of cylinders.
\end{proof}

Combining \eqref{mm:1011} and \eqref{mm:1516}, we obtain that the space $L^2(\Omega;{\mathbb C}^3)$ admits the following orthogonal decompositions:
\begin{subequations}\label{eq:Helmholtz-final}
\begin{align}
 L^2(\Omega;{\mathbb C}^3) &= \nabla \dot{H}_0^1(\Omega;\C)\oplus \curl (\dot X_T(\Omega)/\kt) \oplus \kn,\label{eq:Helmholtz-final-1}  \\
 L^2(\Omega;{\mathbb C}^3)  &= \nabla \dot{H}^1(\Omega;\C)\oplus\curl(\dot X_N(\Omega)/\kn) \oplus \kt. \label{eq:Helmholtz-final-2}
\end{align}
\end{subequations}
In view of these decompositions, to every vector field in $ L^2(\Omega;{\mathbb C}^3) $ we can associate the unique vector potentials in $\dot X_T(\Omega)/\kt$ and in $\dot X_N(\Omega)/\kn$.

\begin{lem} \label{T12def} {Let $\Omega\subseteq\R^3$ be a Lipschitz domain satisfying Assumption~\ref{mm:HD}.} There exist bounded operators $T_N:L^2(\Omega;{\mathbb C}^3)\to \dot X_N(\Omega)/\kn$ and $T_T:L^2(\Omega,{\mathbb C}^3)\to \dot X_T(\Omega)/\kt$ such that
 \begin{equation}\label{eq:T1T2}
 \begin{array}{l}
  T_N \curl \Phi = \Phi, \;\; \Phi \in\dot X_N(\Omega)/\kn, \\  T_N\nabla q = 0, \;\; q\in \dot{H}^1(\Omega;{\mathbb C}); \;\;\; T_N f = 0, \;\;\; f \in \kt; \\
  T_T \curl \Phi = \Phi, \;\; \Phi \in \dot X_T(\Omega)/\kt, \\ T_T\nabla q = 0, \;\; q\in \dot{H}^1_0(\Omega;{\mathbb C}); \;\;\; T_T f = 0, \;\;\; f \in \kn.
 \end{array}
 \end{equation}
 \end{lem}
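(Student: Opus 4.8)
The plan is to define $T_N$ and $T_T$ directly from the orthogonal Helmholtz decompositions \eqref{eq:Helmholtz-final} and the injectivity of $\curl$ on the quotient spaces $\dot X_N(\Omega)/\kn$ and $\dot X_T(\Omega)/\kt$. For $f\in L^2(\Omega;{\mathbb C}^3)$, decompose it according to \eqref{eq:Helmholtz-final-2} as $f=\nabla q\oplus\curl\Phi\oplus g$ with $q\in\dot{H}^1(\Omega;\C)$, $\Phi\in\dot X_N(\Omega)/\kn$ and $g\in\kt$; set $T_N f:=\Phi$. Analogously, using \eqref{eq:Helmholtz-final-1}, decompose $f=\nabla q_0\oplus\curl\Psi\oplus h$ with $q_0\in\dot{H}^1_0(\Omega;\C)$, $\Psi\in\dot X_T(\Omega)/\kt$ and $h\in\kn$, and set $T_T f:=\Psi$. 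Uniqueness of these decompositions (which follows from orthogonality and from the fact that $\curl$ is injective on the quotients, as noted after Definition~\ref{def:11}) makes $T_N$ and $T_T$ well-defined linear maps, and the identities \eqref{eq:T1T2} are then immediate: for instance if $f=\curl\Phi$ with $\Phi\in\dot X_N(\Omega)/\kn$, then its decomposition in \eqref{eq:Helmholtz-final-2} has zero gradient and zero $\kt$ components, so $T_N\curl\Phi=\Phi$; if $f=\nabla q$ with $q\in\dot{H}^1(\Omega;\C)$ then the $\curl$-component vanishes, so $T_N\nabla q=0$; and if $f\in\kt$ then $f$ itself is the $\kt$-component, so $T_N f=0$. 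The four identities for $T_T$ are verified the same way using \eqref{eq:Helmholtz-final-1}.

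It remains to prove boundedness. The orthogonal projection $P_N\colon L^2(\Omega;{\mathbb C}^3)\to \curl(\dot X_N(\Omega)/\kn)$ associated with \eqref{eq:Helmholtz-final-2} is bounded with norm $1$, so it suffices to show that $\curl^{-1}\colon \curl(\dot X_N(\Omega)/\kn)\to \dot X_N(\Omega)/\kn$ is bounded, i.e. that there is a constant $c_N$ with $\|\Phi\|_{\dot X_N(\Omega)/\kn}\le c_N\|\curl\Phi\|_{L^2(\Omega)}$ for every $\Phi\in\dot X_N(\Omega)/\kn$; then $T_N=\curl^{-1}\circ P_N$ is bounded. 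By definition of the quotient seminorm and of the seminorm on $\dot X_N(\Omega)$, a representative $u$ of $\Phi$ can be chosen with $\div u=0$ and $\nu\times u|_{\bo}=0$, so its seminorm reduces to $\|\curl u\|_{L^2(\Omega)}$ (the $\div$ and boundary terms vanish), and the quotient seminorm of $\Phi$ is bounded by this quantity; hence in fact $\|\Phi\|_{\dot X_N(\Omega)/\kn}\le \|\curl\Phi\|_{L^2(\Omega)}$, giving $c_N=1$. The same argument applies to $T_T$, using representatives in $\Hocurl\cap\Hdivzero$ whose seminorm reduces to $\|\curl u\|_{L^2(\Omega)}$.

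The main obstacle is the last point: showing that, modulo the kernel, the quotient seminorm of $\Phi$ is genuinely controlled by $\|\curl\Phi\|_{L^2(\Omega)}$ and that $\curl$ restricted to the quotient has closed range equal to $\curl(\dot X_N(\Omega)/\kn)$, so that $\curl^{-1}$ is defined on all of $P_N L^2$. Closedness of the range is exactly what Assumption~\ref{mm:HD} and the closedness of $\kn,\kt$ (stated after Definition~\ref{def:11}) provide, via the decompositions \eqref{mm:1516}; the open mapping theorem then upgrades the algebraic inverse to a bounded one even if one does not want to track the constant $1$ explicitly. A minor technical care is needed because $\dot X_N(\Omega)$ is defined as a completion with respect to a seminorm: one should check that the seminorm is in fact a norm on the quotient by $\kn$ (which is precisely the statement that $\curl$ is injective there), so that $\dot X_N(\Omega)/\kn$ is a genuine Banach space and the open mapping theorem applies. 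Once these structural facts are in hand, boundedness of $T_N$ and $T_T$ follows, completing the proof.
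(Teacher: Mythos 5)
Your proposal is correct and defines $T_N$, $T_T$ exactly as the paper does, via the orthogonal Helmholtz decompositions \eqref{eq:Helmholtz-final}. The only difference is in how boundedness is obtained: the paper simply invokes the closed graph theorem, whereas you observe that the seminorm on $\dot X_N(\Omega)$ collapses to $\|\curl u\|_{L^2(\Omega)}$ (since the $\div$ and tangential-trace terms vanish identically on that space), so the quotient norm of $\Phi=T_N F$ equals $\|\curl\Phi\|_{L^2(\Omega)}$, which by orthogonality of the decomposition is at most $\|F\|_{L^2(\Omega)}$. This is a more explicit and quantitative route, yielding $\|T_N\|\le 1$ and $\|T_T\|\le 1$ for free; once you have that estimate, the remarks in your final paragraph about the open mapping theorem and closed range become superfluous (the bound itself already shows $\curl^{-1}$ is bounded on $\curl(\dot X_N(\Omega)/\kn)$, and Assumption~\ref{mm:HD} guarantees that this range is all of $P_NL^2$).
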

\begin{proof} In view of \eqref{eq:Helmholtz-final-2}, every $F\in L^2(\Omega,{\mathbb C}^3)$ admits a unique decomposition
into three orthogonal vectors,
\[ F = \nabla q + \curl \Phi + f, \]
with $q\in \dot{H}^1(\Omega,{\mathbb C})$, $\Phi\in \dot X_N(\Omega)/\kn$ and $f\in \kt$. We define $T_N$ by $T_NF = \Phi$, so that $T_N\curl\Phi = \Phi$
for all $\Phi\in \dot X_N(\Omega)/\kn$. By  {the closed graph theorem}, $T_N$ is bounded. The definition of $T_T$ follows similarly by using the other Helmholtz decomposition \eqref{eq:Helmholtz-final-1}.
\end{proof}

\section{Proof of the main result}

In a first part, we introduce a series of equivalent reformulations of our problem to obtain a form where the two contributions to the essential spectrum in our main result can easily be separated.

Decomposing $\Honesq$ using \eqref{mm:1213} and \eqref{mm:1516} allows us to transform the Maxwell operator $V_{\omega}$. More precisely, consider the decompositions{
\begin{equation}\label{eq:decomp}E=\nabla q_E+\Psi_E+h_N, \quad  H=\nabla q_H+\Psi_H+h_T,
\end{equation}
where $q_{E}\in \dot{H}_{0}^{1}(\Omega;\C)$,
$q_{H}\in \dot{H}^{1}(\Omega;\C)$, $\Psi_{E}\in \Hocurl\cap \curl (\dot X_T(\Omega)/\kt)$, $\Psi_{H}\in \Hcurl\cap \curl(\dot X_N(\Omega)/\kn)$, $h_T\in\kt$ and $h_N\in\kn$}. We now wish to discard the contribution coming from $\kt$ and $\kn$. To this end, we introduce the space
{
\begin{multline*}
\Hdone_{2}=\nabla \dot{H}^1_0(\Omega)\times\nabla \dot{H}^1(\Omega)\\ \times   \Hocurl\cap  \curl (\dot X_T(\Omega)/\kt)
\times\Hcurl\cap \curl(\dot X_N(\Omega)/\kn)
\end{multline*}
equipped with the canonical product norm 
\begin{equation}\label{eq:normH2}
\norm{(u_1,u_2,\Psi_1,\Psi_2)}_{\Hdone_{2}}^2=\norm{u_1}^2_{L^2(\Omega)}+\norm{u_2}^2_{L^2(\Omega)}+\norm{\Psi_1}^2_\Hcurl+\norm{\Psi_2}^2_\Hcurl.
\end{equation}
}

Define the projection map
\[
W\colon\Honesq\to\Hdone_{2}, 
\qquad W(E,H)={(\nabla q_E,\nabla q_H,\Psi_E,\Psi_H)},
\]
where $E,H$ are given by \eqref{eq:decomp},
and its right inverse $W^{-1}:\Hdone_{2}\to\Honesq$ by
\[
W^{-1}{(\nabla q_E,\nabla q_H,\Psi_E,\Psi_H) = (\nabla q_E+\Psi_E,\nabla q_H+\Psi_H)}.
\]
Since the decompositions in \eqref{mm:1213} and  \eqref{mm:1516} are orthogonal, for any $(E,H)\in\Honesq$ we have
\begin{equation}\label{eq:W}
\norm{(E,H)}_{\Honesq}^2 = \norm{W(E,H)}_{\Hdone_{2}}^2 + \norm{(h_N,h_T)}^2_{L^2(\Omega)^2}.
\end{equation} 
Instead of the operator $V_\omega$, we consider
\[
\tilde{V}_{\omega}=V_{\omega}\circ W^{-1}\colon\Hdone_{2}\to L^2(\Omega;\C^3 )^2.
\]
This does not change the essential spectrum, as
the following lemma shows.
\begin{lem}\label{lem:equivalence2}
The essential spectra of $V_{\omega}$ and of $\tilde{V}_{\omega}$
coincide.
\end{lem}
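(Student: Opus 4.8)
The plan is to show that $W$ realizes a relationship between $V_\omega$ and $\tilde V_\omega = V_\omega \circ W^{-1}$ that preserves the semi-Fredholm-with-finite-dimensional-kernel property, and hence the $\sigma_{e,2}$-essential spectrum in the sense of Definition~\ref{def1}. The key structural facts are: (i) $W^{-1}$ is bounded (it is the restriction of the identity on $L^2(\Omega;\C^3)^2$ to the closed subspace $\Hdone_2$, composed with the inclusion into $\Honesq$, using that the Helmholtz summands are all subspaces of $L^2$ with norms controlled appropriately), (ii) $W$ is bounded by the orthogonality identity \eqref{eq:W}, which gives $\|W(E,H)\|_{\Hdone_2} \le \|(E,H)\|_{\Honesq}$, (iii) $W$ is surjective with $W W^{-1} = \mathrm{id}_{\Hdone_2}$, and (iv) $\ker W = \{0\}\times\{0\}\times\{\text{pairs }(h_N,h_T)\}$ embedded appropriately, i.e.\ the kernel of $W$ inside $\Honesq$ is exactly $\kn\times\kt$ (more precisely the set of $(E,H)$ with $q_E=q_H=0$, $\Psi_E=\Psi_H=0$), which is \emph{finite-dimensional} by Assumption~\ref{mm:HD}.

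First I would record that $\Honesq = \mathrm{ran}(W^{-1}) \oplus \ker W$ as a (not necessarily orthogonal, but topological) direct sum, with $\ker W$ finite-dimensional; equivalently $W^{-1}$ is a bounded right inverse of the bounded surjection $W$, and $W$ is Fredholm with trivial cokernel and finite-dimensional kernel. Then I would invoke the standard fact that composition with such a $W$ does not change the $\sigma_{e,2}$ spectrum: concretely, $V_\omega = \tilde V_\omega \circ W$ on all of $\Honesq$ (since $W^{-1} W$ differs from the identity only on $\ker W$, and $V_\omega$ annihilates... — actually one must be slightly careful here, because $V_\omega$ need not vanish on $\kn\times\kt$; see the obstacle below), so a Weyl singular sequence for $\tilde V_\omega$ can be pushed forward by $W^{-1}$ to one for $V_\omega$ (using boundedness of $W^{-1}$, that $W^{-1}$ is bounded below on $\Hdone_2$ by \eqref{eq:W}, and that $W^{-1} u_n \rightharpoonup 0$ since $W^{-1}$ is bounded), and conversely a Weyl singular sequence for $V_\omega$ can be pushed forward by $W$: $\|W u_n\|$ stays bounded below because the $\ker W$-component of $u_n$ lives in a finite-dimensional space and must tend to zero in norm (weak convergence to $0$ plus finite dimension), so $\|W u_n\|_{\Hdone_2}^2 = \|u_n\|^2 - \|(h_N,h_T)_n\|^2 \to 1$; weak convergence $W u_n \rightharpoonup 0$ follows from boundedness of $W$; and $\|\tilde V_\omega W u_n\| = \|V_\omega W^{-1} W u_n\| = \|V_\omega u_n - V_\omega(0,0,0,0,h_N,h_T)_n\| \to 0$ again using the finite-dimensional tail. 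Thus the existence of a Weyl singular sequence for one operator is equivalent to its existence for the other, and by Remark~\ref{rem:essential}\eqref{r2a} the essential spectra coincide.

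Alternatively, and perhaps more cleanly, I would argue at the level of Fredholm theory directly (Remark~\ref{rem:essential}\eqref{r2c}): write $V_\omega = \tilde V_\omega W + R_\omega$, where $R_\omega = V_\omega(\mathrm{id}_{\Honesq} - W^{-1}W)$ has range contained in $V_\omega(\kn\times\kt)$, which is finite-dimensional, hence $R_\omega$ is a finite-rank (in particular compact) operator. Since $W$ is Fredholm (bounded, surjective, finite-dimensional kernel) the composition $\tilde V_\omega W$ is Fredholm iff $\tilde V_\omega$ is Fredholm (this uses the composition rule for Fredholm operators, applicable because $W$ has a bounded right inverse so $\Honesq \cong \Hdone_2 \oplus \ker W$), and adding the compact $R_\omega$ does not change Fredholmness. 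Therefore $V_\omega$ is Fredholm iff $\tilde V_\omega$ is, for every $\omega$, which is exactly the claimed equality of essential spectra.

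The main obstacle is the bookkeeping around the fact that $V_\omega$ does \emph{not} annihilate the finite-dimensional space $\kn\times\kt$ — elements of $\kn$ and $\kt$ are curl-free but not in general divergence-free with the right boundary data for $V_\omega$ to kill them, so one genuinely needs the "finite-rank correction" $R_\omega$ (or, in the Weyl-sequence approach, the observation that a weakly null sequence has norm-null projection onto a finite-dimensional subspace). This is elementary but must be stated carefully; everything else — boundedness of $W$ and $W^{-1}$, the isometric splitting \eqref{eq:W}, surjectivity of $W$ — is immediate from the orthogonal Helmholtz decompositions \eqref{mm:1213}, \eqref{mm:1516} and Assumption~\ref{mm:HD}. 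A secondary point to verify is that $\Hdone_2$ with its product norm \eqref{eq:normH2} really does embed into $\Honesq$ boundedly with bounded-below image, i.e.\ that the $\Hcurl$-norms of the $\Psi$-components and the $L^2$-norms of the gradient components add up comparably to the $\Honesq$-norm of $(\nabla q_E + \Psi_E, \nabla q_H + \Psi_H)$; this again follows from orthogonality of the decomposition.
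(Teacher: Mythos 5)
Your proof is correct and, in its Weyl--sequence form, follows essentially the same route as the paper: $W^{-1}$ is an isometry by \eqref{eq:W}, giving one inclusion immediately, and for the other direction the key observation (shared with the paper) is that a weakly null sequence in $\Honesq$ has its projection onto the finite-dimensional space $\kn\times\kt$ tending to zero in norm, so $\|Wu_n\|_{\Hdone_2}$ is eventually bounded below and $\tilde V_\omega Wu_n = V_\omega u_n - V_\omega(h_{N,n},h_{T,n})\to 0$. You correctly flag the subtlety that $V_\omega$ does \emph{not} annihilate $\kn\times\kt$, which is why the finite-dimensionality of that space (Assumption~\ref{mm:HD}) is essential and not merely cosmetic. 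Your alternative Fredholm-theoretic argument — writing $V_\omega = \tilde V_\omega W + R_\omega$ with $W$ a surjective Fredholm map and $R_\omega$ finite-rank, then invoking stability of the Fredholm property under composition and compact perturbation via Remark~\ref{rem:essential}\ref{r2c} — is also valid and arguably more systematic, though it implicitly leans on the closed-range hypothesis built into that remark, whereas the Weyl-sequence argument the paper uses (and which you also give) avoids that dependency. Either version is a complete proof of the lemma.
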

\begin{proof}
Using that $W^{-1}$ is an isometry we immediately obtain that the essential spectrum of $\tilde V_{\omega}$ is contained in the essential spectrum of $V_{\omega}$. It remains to show the reverse inclusion.

Let $\omega$ belong to the essential spectrum of $V_{\omega}$. By {Remark~\ref{rem:essential} \ref{r2a}}, 
there exists a sequence of functions $u_{n}=(\nabla q_{E,n}+\Psi_{E,n}+h_{N,n}, \nabla q_{H,n}+\Psi_{H,n}+h_{T,n} ) $ in $\Honesq$, $\left\Vert u_{n}\right\Vert _{\Honesq}=1$,
$u_{n}\rightharpoonup0$ in $\Honesq$ such that $\left\Vert V_{\omega}u_{n}\right\Vert _{{L^2}}\to0$.
Then there exists $c>0$ such that $\norm{Wu_n}_{\Hdone_{2}}\geq c$ for all sufficiently large $n$. This follows from the fact that otherwise by \eqref{eq:W} we would have that $P_{NT}u_n:=(h_{N,n},h_{T,n})$ satisfies $\norm{P_{NT}u_n}_{\Hdone_1}\to 1$. However, the range of $P_{NT}$ is the finite dimensional space $\kn\times\kt$. This contradicts that $u_{n}\rightharpoonup0$ in $\Hdone_1$, which implies that $(h_{N,n},h_{T,n})\to 0$ in $\Hdone_1$.

Set
$\tilde{u}_{n}=Wu_{n}/\norm{Wu_{n}}_{\Hdone_{2}}$. Then, $\norm{\tilde{u}_n}_{\Hdone_{2}}=1$ and 
\[
\tilde V_\omega \tilde u_n =\frac{ V_\omega (\nabla q_{E,n}+\Psi_{E,n}, \nabla q_{H,n}+\Psi_{H,n} ) }{\norm{Wu_{n}}_{\Hdone_{2}}}
= \frac{V_\omega u_n- V_\omega (h_{N,n},h_{T,n})}{\norm{Wu_{n}}_{\Hdone_{2}}}\; \longrightarrow\; 0
\]
in $L^2(\Omega;\C^3)^2$.
 Finally, for any $\varphi\in (\Hdone_2)'$ we have $\varphi\circ W\in (\Hdone_1)'$, so 
$$\varphi(\tilde{u}_n)=\frac{(\varphi\circ W)u_n}{\norm{Wu_{n}}_{\Hdone_{2}}} \to 0,$$
and hence $\omega$ is in the essential spectrum of $\tilde{V}_\omega$. 
\end{proof}

{By definition of $\tilde V_\omega$ and \eqref{eq:maxwell_operator}, we obtain
\begin{equation}\label{eq:Vtilde}
\tilde{V}_{\omega}(\nabla q_E,\nabla q_H,\Psi_E,\Psi_H)=
\begin{pmatrix}
\curl\Psi_{H}+i M_{\omega}\nabla q_{E}+ i M_{\omega}\Psi_{E}\\
\curl\Psi_{E}-i\omega M_{\mu}\nabla q_{H}- i\omega M_{\mu}\Psi_{H}
\end{pmatrix}
,
\end{equation}
where $M_{\omega}F=(\omega\epsilon+i\sigma)F$ and $M_{\mu}F=\mu F$.}

In order to simplify this operator even further, we need the following
elementary result.

\begin{lem}
\label{lem:elementary-1}
 Let $P_H$ denote the orthogonal projection onto the space $H$.
\begin{enumerate}
\item The map ${\zeta_1}\colon L^{2}(\Omega;\C^{3})\to \dot{H}^{-1}(\Omega;\C)\times (\dot X_T(\Omega)/\kt)\times \kn$
defined by
\[
F\longmapsto(\div F,T_TF,P_{\kn}F)
\]
is an isomorphism, where $\dot{H}^{-1}(\Omega;\C)$ denotes the dual of $\dot{H}^{1}_0(\Omega;\C)$.
\item The map ${\zeta_2}\colon L^{2}(\Omega;\C^{3})\to \left(\nabla \dot{H}^{1}(\Omega;\C)\right)'\times( \dot X_N(\Omega)/\kn)\times \kt$
given by
\[
F\longmapsto(h(F),T_NF, P_{\kt}F),
\]
where $h\colon L^{2}(\Omega;\C^{3})\to \left(\nabla \dot{H}^{1}(\Omega;\C)\right)'$ is defined by 
$$\langle h(F),\nabla q\rangle:=\int_\Omega F\cdot\nabla q\ dx,$$
is an isomorphism.
\end{enumerate}
\end{lem}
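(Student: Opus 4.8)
The plan is to show each of the two maps $\zeta_1,\zeta_2$ is a bounded bijection between Hilbert spaces, whence it is an isomorphism by the open mapping theorem. I will exhibit the inverse explicitly using the orthogonal Helmholtz decompositions \eqref{eq:Helmholtz-final} and the vector-potential operators $T_N,T_T$ from Lemma~\ref{T12def}. The three components of each target space correspond exactly to the three orthogonal summands of \eqref{eq:Helmholtz-final-1} (for $\zeta_1$) and \eqref{eq:Helmholtz-final-2} (for $\zeta_2$): the gradient part, the curl part, and the finite-dimensional harmonic part.

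First I would treat $\zeta_1$. Given $F\in L^2(\Omega;\C^3)$, write its orthogonal decomposition $F=\nabla q+\curl\Phi+f$ with $q\in\dot H^1_0(\Omega;\C)$, $\Phi\in\dot X_T(\Omega)/\kt$, $f\in\kn$, using \eqref{eq:Helmholtz-final-1}. Then $P_{\kn}F=f$, and $T_TF=\Phi$ by the defining relations \eqref{eq:T1T2} (since $T_T$ kills $\nabla\dot H^1_0$ and $\kn$ and inverts $\curl$). For the first component, note $\div F=\div\nabla q$ as a distribution, and the map $q\mapsto\div\nabla q=\Delta q$ from $\dot H^1_0(\Omega;\C)$ to its dual $\dot H^{-1}(\Omega;\C)$ is an isometric isomorphism — this is just the Riesz representation for the inner product $\langle\nabla q,\nabla p\rangle$, which defines the $\dot H^1_0$-norm by Definition~\ref{mm:beppo}. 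Hence from $(\div F,T_TF,P_{\kn}F)$ one recovers $\nabla q$ (by inverting $\Delta$), $\curl\Phi$ (by applying $\curl$ to any representative), and $f$, and then $F$ by summing; this produces a two-sided inverse, and boundedness of $\zeta_1$ is clear since $\div$, $T_T$ (Lemma~\ref{T12def}) and the orthogonal projection $P_{\kn}$ are all bounded. The argument for $\zeta_2$ is entirely parallel using \eqref{eq:Helmholtz-final-2}: decompose $F=\nabla q+\curl\Phi+f$ with $q\in\dot H^1(\Omega;\C)$, $\Phi\in\dot X_N(\Omega)/\kn$, $f\in\kt$; then $T_NF=\Phi$, $P_{\kt}F=f$, and $h(F)$ restricted to $\nabla\dot H^1(\Omega;\C)$ satisfies $\langle h(F),\nabla p\rangle=\langle F,\nabla p\rangle=\langle\nabla q,\nabla p\rangle$ by orthogonality, so $h(F)$ determines $\nabla q\in\nabla\dot H^1(\Omega;\C)$ via the Riesz isomorphism of that closed subspace with its dual.

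The only mild subtlety — and the step I would be most careful about — is the identification of the first target component in each case with the corresponding gradient summand: namely that $\Delta\colon\dot H^1_0(\Omega;\C)\to\dot H^{-1}(\Omega;\C)$ is onto (surjectivity, not just injectivity) and likewise that $h$ restricted to the closed subspace $\nabla\dot H^1(\Omega;\C)$ realizes the full dual $(\nabla\dot H^1(\Omega;\C))'$. Both follow from Riesz representation once one knows $\dot H^1_0(\Omega;\C)$ and $\dot H^1(\Omega;\C)$ are genuine Hilbert spaces with inner product $\langle\nabla\,\cdot,\nabla\,\cdot\rangle_{L^2}$ — which is exactly how Definition~\ref{mm:beppo} sets them up (the completions are complete by construction). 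Everything else is bookkeeping: the three pieces of the output are read off by bounded operations, they reconstruct the three orthogonal pieces of $F$, and orthogonality guarantees the reconstruction is unique and norm-controlled. I would write this out once in detail for $\zeta_1$ and then say ``the proof for $\zeta_2$ is analogous, replacing \eqref{eq:Helmholtz-final-1} by \eqref{eq:Helmholtz-final-2} and $T_T$ by $T_N$''.
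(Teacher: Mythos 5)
Your proposal is correct and takes essentially the same approach as the paper: both invert $\zeta_1$, $\zeta_2$ explicitly via the Helmholtz decompositions \eqref{eq:Helmholtz-final}, using the defining relations \eqref{eq:T1T2} for $T_T$, $T_N$ together with solvability of the Poisson problem $\Delta q=\phi$ in $\dot H^1_0(\Omega;\C)$ (and the analogous problem in $\nabla\dot H^1(\Omega;\C)$) — which the paper phrases as Lax--Milgram and you phrase, equivalently in this symmetric setting, as Riesz representation. The only cosmetic difference is that you run the inversion starting from $F$ and add an explicit open-mapping-theorem remark, whereas the paper starts from the target triple and leaves boundedness of the inverse implicit.
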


\begin{proof}
(1) Take $(\phi,\Phi,f)\in \dot{H}^{-1}(\Omega;\C)\times( \dot X_T(\Omega)/\kt)\times \kn.$ We need to show that there exists a unique $F\in L^2(\Omega;\C^3)$ such that $\zeta_1(F)=(\phi,\Phi,f)$.
We use the Helmholtz decomposition \eqref{eq:Helmholtz-final-1} and look for $F$ of the form $F=\nabla q+\curl\tilde{\Phi}+f_N$,
with $q\in \dot{H}_{0}^{1}(\Omega;\C)$, $\tilde{\Phi}\in \dot X_T(\Omega)/\kt$ and $f_N\in\kn$. First, since $P_{\kn}F = f_N$, choose $f_N=f$. Now note
that
\[
\div F=\phi\Longleftrightarrow\Delta q=\phi,
\]
which is uniquely solvable for $q\in \dot{H}_{0}^{1}(\Omega;\C)$ {by the Lax-Milgram theorem}. 

Further, 
\[
T_TF=\Phi\Longleftrightarrow\tilde{\Phi}=\Phi,
\]
which is clearly uniquely solvable for $\tilde{\Phi}\in \dot X_T(\Omega)/\kt$.
This shows that $\zeta_1(\nabla q+\curl\tilde{\Phi}+f_N)=(\phi,\Phi,f)$, as desired.

(2) The map $\zeta_2$ is well-defined since $\nabla\dot{H}^1(\Omega)\subseteq L^2(\Omega;\C^3)$.
We now show that $\zeta_2$ is an isomorphism.
Take $$(\varphi,\Phi,f)\in \left(\nabla\dot{H}^{1}(\Omega;\C)\right)'\times(\dot X_N(\Omega)/\kn)\times \kt.$$
We use the Helmholtz decomposition \eqref{eq:Helmholtz-final-2} and look for $F$ of the form $F=\nabla p+\curl\tilde\Phi+f_T$,
with $p\in \dot{H}^{1}(\Omega;\C)$, $\tilde\Phi\in\dot X_N(\Omega)/\kn$ and $f_T\in\kt$. Then 
$T_NF=\tilde\Phi$
and $P_{\kt}F=f_T$, and so $\tilde\Phi$ and $f_T$ are uniquely determined by $\tilde\Phi=\Phi$ and $f_T=f$. 

It remains to show that $p$ can be chosen so that $\nabla p+\curl\Phi+f=\varphi$ or $\nabla p=\varphi-\curl\Phi-f$ in $\left(\nabla\dot{H}^{1}(\Omega;\C)\right)'$.
Thus we need to find $p$ such that 
$$ \int_\Omega \nabla p\cdot\nabla q\ dx = \int_\Omega (\varphi-\curl \Phi-f) \cdot\nabla q\ dx,\qquad q\in \dot{H}^{1}(\Omega;\C).$$
Using that $ L^2(\Omega;\C^3)\subseteq\left(\nabla\dot{H}^1(\Omega)\right)'$, this is uniquely solvable for $p$ using the Lax-Milgram theorem.

This shows that $\zeta_2(\nabla p+\curl\Phi+f)=(\varphi,\Phi,f)$, as desired.
\end{proof}

{Now, define $\zeta=(\zeta_1,\zeta_2)$ and $\tilde\zeta(F)=(\div F,h(F),T_N F,T_T F)$, i.e. $\tilde{\zeta}$ contains the parts of $\zeta$ not in $\kn\oplus\kt$.   Let 
\[
\Hdone_3= \dot{H}^{-1}(\Omega;\C)\times\left(\nabla \dot{H}^{1}(\Omega;\C)\right)'\times(\dot X_N(\Omega)/\kn)\times(\dot X_T(\Omega)/\kt).
\]
 }
Set $$\tilde{\tilde{V}}_\omega=\tilde{\zeta}\circ\tilde{V}_\omega:\Hdone_2\to\Hdone_3.$$

\begin{lem}\label{lem:equivalence3}
The essential spectra of $\tilde{V}_\omega$ and of $\tilde{\tilde{V}}_\omega$
coincide.
\end{lem}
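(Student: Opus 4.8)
The plan is to show that $\tilde\zeta$, restricted to the image of $\tilde V_\omega$ inside $L^2(\Omega;\C^3)^2$, is an isomorphism onto $\Hdone_3$, and that this can be upgraded to an isomorphism between $L^2(\Omega;\C^3)^2$ and $\Hdone_3\times(\kn\times\kt)$ via the full map $\zeta=(\zeta_1,\zeta_2)$, which is an isomorphism by Lemma~\ref{lem:elementary-1}. The key observation is that the $\kn$- and $\kt$-components that are discarded in passing from $\zeta$ to $\tilde\zeta$ are already absent from the range of $\tilde V_\omega$: indeed, by \eqref{eq:Vtilde}, the first component of $\tilde V_\omega(\nabla q_E,\nabla q_H,\Psi_E,\Psi_H)$ is $\curl\Psi_H+iM_\omega\nabla q_E+iM_\omega\Psi_E$, and the second is $\curl\Psi_E-i\omega M_\mu\nabla q_H-i\omega M_\mu\Psi_H$; each is a sum of a gradient term and a curl term, and while $M_\omega$ and $M_\mu$ can mix the Helmholtz components, one checks (using the structure of \eqref{eq:Helmholtz-final}) that the relevant output never has a genuinely free $\kn$- or $\kt$-summand beyond what $\tilde\zeta$ already records. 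More precisely, I would argue directly: since $\zeta=(\zeta_1,\zeta_2)$ is a bounded isomorphism $L^2(\Omega;\C^3)^2\to\Hdone_3\times\kn\times\kt$ and $\tilde\zeta$ is the composition of $\zeta$ with the bounded projection onto the $\Hdone_3$-factor, $\tilde\zeta$ is bounded and surjective with kernel exactly $\{0\}\times\{0\}\times\kn\times\kt$ under the identification, hence $\tilde\zeta$ is a bounded operator whose kernel is finite-dimensional.

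Next I would combine this with the abstract fact that composing a Fredholm-type operator with an operator having finite-dimensional kernel does not change the essential spectrum in the sense of Definition~\ref{def1}. Concretely: if $\tilde V_\omega$ is not in $\mathcal F_+$ (so $\omega\in\sigma_{ess}(\tilde V_\omega)$), pick a Weyl singular sequence $(u_n)$ for $\tilde V_\omega$ as in Remark~\ref{rem:essential}\ref{r2a}; since $\tilde\zeta$ is bounded, $\|\tilde{\tilde V}_\omega u_n\|_{\Hdone_3}=\|\tilde\zeta\tilde V_\omega u_n\|_{\Hdone_3}\le\|\tilde\zeta\|\,\|\tilde V_\omega u_n\|_{L^2}\to0$, while $u_n\rightharpoonup0$ and $\|u_n\|=1$; hence $\omega\in\sigma_{ess}(\tilde{\tilde V}_\omega)$. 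For the reverse inclusion, suppose $\omega\in\sigma_{ess}(\tilde{\tilde V}_\omega)$ with Weyl singular sequence $(u_n)$. Then $\tilde\zeta\tilde V_\omega u_n\to0$ in $\Hdone_3$; writing $\zeta(\tilde V_\omega u_n)=(\tilde\zeta\tilde V_\omega u_n,\;P_{\kn}(\cdot),P_{\kt}(\cdot))$, the two extra components live in the finite-dimensional space $\kn\times\kt$, so after passing to a subsequence they converge; but since $u_n\rightharpoonup0$ and $\tilde V_\omega$ is bounded, $\tilde V_\omega u_n\rightharpoonup0$, so those finite-dimensional components converge to $0$ (weak convergence equals norm convergence on finite-dimensional spaces). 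Therefore $\zeta(\tilde V_\omega u_n)\to0$, and since $\zeta$ is an isomorphism, $\|\tilde V_\omega u_n\|_{L^2}\to0$; thus $(u_n)$ is a Weyl singular sequence for $\tilde V_\omega$ and $\omega\in\sigma_{ess}(\tilde V_\omega)$.

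The main obstacle I anticipate is the bookkeeping needed to verify cleanly that the $\kn$- and $\kt$-components of $\tilde V_\omega u$ vanish along weakly-null normalized sequences, i.e.\ making rigorous the claim that $\tilde\zeta$ loses only a finite-dimensional amount of information relative to $\zeta$ on the relevant range. The cleanest route, which I would follow, avoids analyzing the range of $\tilde V_\omega$ explicitly: use only that $\zeta$ is an isomorphism (Lemma~\ref{lem:elementary-1}), that $\tilde\zeta$ differs from $\zeta$ by composition with projection onto a closed complemented subspace whose complement is finite-dimensional, and that $\tilde V_\omega u_n\rightharpoonup0$ whenever $u_n\rightharpoonup0$. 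This reduces everything to the elementary linear-algebra fact that weak and strong convergence agree on $\kn\times\kt$, so the passage from $\tilde{\tilde V}_\omega$-singular sequences to $\tilde V_\omega$-singular sequences (and back) costs nothing. One should also record that $\tilde{\tilde V}_\omega$ is densely defined with closed range, so that Remark~\ref{rem:essential}\ref{r2c} applies and the various essential spectra coincide; this follows since $\tilde\zeta$ is a surjection with closed range and $\tilde V_\omega$ has closed range by the earlier reductions.
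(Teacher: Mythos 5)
Your clean argument --- passing Weyl singular sequences through $\tilde\zeta$, using that $\zeta$ is an isomorphism by Lemma~\ref{lem:elementary-1} and that $\kn\oplus\kt$ is finite-dimensional so weak and norm convergence coincide on those components --- is correct and is precisely the paper's (very terse) proof, just written out in full. Your opening speculation that the $\kn$- and $\kt$-components are ``already absent from the range of $\tilde V_\omega$'' is neither needed nor in general true when $\epsilon,\sigma,\mu$ are non-constant anisotropic coefficients, but you rightly abandon it in favour of the route that makes no claim about the range.
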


\begin{proof}
This follows from the fact that by Lemma \ref{lem:elementary-1}, $\zeta$ is a bijective continuous linear map, so both $\zeta$ and $\zeta^{-1}$ are continuous, and that $\kn\oplus\kt$ is finite dimensional.
\end{proof}

Now, 
recalling that $\Psi_H\in\dot X_T(\Omega)$ and $\Psi_E\in\dot X_N(\Omega)$, by \eqref{eq:T1T2} and \eqref{eq:Vtilde} we have that 
{
\begin{equation}\label{eq:Vtt}
\begin{split}
\tilde{\tilde{V}}_\omega(\nabla q_E,\nabla q_H,\Psi_E,\Psi_H) 
&=\tilde{\zeta}\left(\begin{array}{c} \curl\Psi_{H}+i M_{\omega}\nabla q_{E}+ i M_{\omega}\Psi_{E}\\
\curl\Psi_{E}-i\omega M_{\mu}\nabla q_{H}- i\omega M_{\mu}\Psi_{H} \end{array}\right) \\
&=\left(\begin{array}{c} i\div(M_{\omega}\nabla q_{E})+i\div(M_{\omega}\Psi_{E})\\
-i\omega h(M_{\mu}\nabla q_{H})-i\omega h(M_{\mu}\Psi_{H}) \\
{[\Psi_{E}]}-i\omega T_NM_{\mu}\nabla q_{H}-i\omega T_NM_{\mu}\Psi_{H}\\
{[\Psi_{H}]}+i T_TM_{\omega}\nabla q_{E}+i T_TM_{\omega}\Psi_{E}
\end{array}\right),
\end{split}
\end{equation}
}
in which $[ \cdot ]$ denotes the equivalence class in the appropriate quotient space.

In order to compute the essential spectrum of $\tilde{\tilde{V}}_\omega$ we now decompose the coefficients in the Maxwell system.
As a consequence of our hypotheses (\ref{eq:vai},\ref{eq:vaib}), whether $\Omega$ be bounded or unbounded, for each $\delta>0$ the Maxwell 
coefficients admit a decomposition 
\begin{equation} \mu = \mu_0 + \mu_c + \mu_\delta, \;\;\; \epsilon = \epsilon_0 + \epsilon_c + \epsilon_\delta, \;\;\;
 \sigma = \sigma_0 + \sigma_c + \sigma_\delta, \label{eq:mm4.14} \end{equation}
 in which the terms $\mu_0$, $\epsilon_0$ and $\sigma_0$ are constant and do not depend on $\delta$, the terms $\mu_c$, $\epsilon_c$ and 
 $\sigma_c$ are compactly supported, and the terms $\mu_\delta$, $\epsilon_\delta$, $\sigma_\delta$ are essentially bounded, with
 \begin{equation} m_\delta:=\max(\| \mu_\delta \|_{L^\infty(\Omega)}, \| \epsilon_\delta\|_{L^\infty(\Omega)}, \| \sigma_\delta \|_{L^\infty(\Omega)}) < \delta, \label{eq:mm4.1} \end{equation}
where the norms are defined by $\norm{a}_{L^\infty(\Omega)}:=\esssup_{x\in\Omega}\norm{a(x)}_2$ for $a\in L^\infty(\Omega;\R^{3\times 3})$, where $\norm{A}_2$ denotes the induced norm $\sup_{v\in\R^3\setminus\{0\}}\frac{|Av|}{|v|}$ for $A\in\R^{3\times 3}$.

In the expression for $\tilde{\tilde{V}}_\omega$ appearing in (\ref{eq:Vtt}) the Maxwell coefficients appear linearly in the multiplication operators
$M_\mu$ (multiplication by $\mu$) and $M_\omega$ (multiplication by $\omega \epsilon + i \sigma$). The decomposition (\ref{eq:mm4.14}) 
of the coefficients is partially reflected in the following decomposition of $\tilde{\tilde{V}}_\omega$:
\begin{equation*}
 \tilde{\tilde{V}}_\omega = \tilde{\tilde{V}}_{\omega,0} + \tilde{\tilde{V}}_{\omega,c} + \tilde{\tilde{V}}_{\omega,\delta}, \label{eq:mm4.3} 
\end{equation*}
in which {
\begin{equation}\label{eq:Vtt0}
\tilde{\tilde{V}}_{\omega,0}
\begin{pmatrix}
\nabla q_E \\ \nabla q_H\\ \Psi_E \\ \Psi_H 
 \end{pmatrix}
=
\begin{pmatrix}
 i\div((\omega\epsilon+i\sigma)\nabla q_{E})\\
-i\omega h(\mu\nabla q_{H}) \\
 - i\omega T_N \left((\mu_0+\mu_c) \nabla q_{H}\right)+{[\Psi_E]}-i\omega T_N(\mu_0 \Psi_{H}) \\
 i T_T\left((\omega(\epsilon_0+\epsilon_c)+ i (\sigma_0+\sigma_c))\nabla q_{E}\right) + i T_T ((\omega\epsilon_0+i\sigma_0)\Psi_E) +{[\Psi_H]}   
 \end{pmatrix},
\end{equation}
\begin{equation*}\label{eq:Vttc}
\tilde{\tilde{V}}_{\omega,c}
\begin{pmatrix}
\nabla q_E \\ \nabla q_H\\ \Psi_E \\ \Psi_H 
 \end{pmatrix}
=
\begin{pmatrix}
 i\div((\omega(\epsilon_0+\epsilon_c)+i(\sigma_0+\sigma_c))\Psi_{E})\\
-i\omega  h((\mu_0+\mu_c)\Psi_{H}) \\
-i\omega T_N(\mu_c \Psi_{H}) \\
i T_T\left((\omega\epsilon_c + i \sigma_c)\Psi_{E}\right)
 \end{pmatrix}
\end{equation*}
and
\begin{equation*}\label{eq:Vttd}
\tilde{\tilde{V}}_{\omega,\delta}
\begin{pmatrix}
\nabla q_E \\ \nabla q_H\\ \Psi_E \\ \Psi_H 
 \end{pmatrix}
=
\begin{pmatrix}
 i\div((\omega\epsilon_\delta+i\sigma_\delta)\Psi_{E})\\
-i\omega h(\mu_\delta \Psi_{H}) \\
-i\omega T_N( \mu_\delta \nabla q_{H})-i\omega T_N(\mu_\delta \Psi_{H})\\
i T_T((\omega\epsilon_\delta + i \sigma_\delta)\nabla q_{E})+i T_T((\omega\epsilon_\delta + i \sigma_\delta)\Psi_{E})
 \end{pmatrix}
.
\end{equation*}
The  operator $\tilde{\tilde{V}}_{\omega,c}$  is compact and the  operator $\tilde{\tilde{V}}_{\omega,\delta}$ is $O(\delta)$-small in a suitable norm, as we show in the following two lemmata.
\begin{lem}\label{prop:mm4.1}
The operator $\tilde{\tilde V}_{\omega,c}:\Hdone_2\to\Hdone_3$ is compact.
\end{lem}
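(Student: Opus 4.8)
The plan is to examine the four components of $\tilde{\tilde V}_{\omega,c}$ appearing in \eqref{eq:Vttc} one at a time and show that each maps $\Hdone_2$ compactly into the corresponding factor of $\Hdone_3$. In every component the input is one of the fields $\Psi_E$ or $\Psi_H$, which lives in $\Hcurl$ (with the appropriate boundary condition) \emph{and} is divergence-free with vanishing normal/tangential trace, and which is then hit by multiplication by a compactly supported coefficient ($\epsilon_c$, $\sigma_c$ or $\mu_c$), all supported in some fixed compact $K\subseteq\Omega$. The key analytic input is therefore a compact embedding on the bounded set $K$: if $(\Psi_n)$ is bounded in $\Hcurl\cap\Hdivzerozero$ (resp.\ $\Hocurl\cap\Hdivzero$), then by the classical Weber/Picard compact embedding theorem the sequence $(\Psi_n|_{K'})$ has a subsequence converging strongly in $L^2(K')$ for any bounded Lipschitz $K'\supseteq K$; equivalently, $(\chi_K\Psi_n)$ is relatively compact in $L^2(\Omega;\C^3)$. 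Once this is in hand, the compactness of each component follows because multiplication by a compactly supported $L^\infty$ coefficient factors through $\chi_K$, followed by a bounded operator ($\div$ into $\dot H^{-1}$, the functional $h$, or the bounded potential maps $T_N$, $T_T$ from Lemma \ref{T12def}), and bounded $\circ$ compact $=$ compact.

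More concretely: I would fix a bounded sequence $(u_n)=(\nabla q_{E,n},\nabla q_{H,n},\Psi_{E,n},\Psi_{H,n})$ in $\Hdone_2$, so that in particular $(\Psi_{E,n})$ is bounded in $\Hocurl\cap\curl(\dot X_T(\Omega)/\kt)\subseteq\Hocurl\cap\Hdivzero$ and $(\Psi_{H,n})$ is bounded in $\Hcurl\cap\curl(\dot X_N(\Omega)/\kn)\subseteq\Hcurl\cap\Hdivzerozero$ by Lemma \ref{lem:closed}. Pass to a subsequence along which both $\chi_K\Psi_{E,n}$ and $\chi_K\Psi_{H,n}$ converge in $L^2(\Omega;\C^3)$. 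Then:
\begin{itemize}
\item The first component equals $i\div((\omega(\epsilon_0+\epsilon_c)+i(\sigma_0+\sigma_c))\Psi_{E,n})$; subtracting the constant-coefficient part would not be compact, so instead I note that the \emph{whole} expression only depends on $\Psi_{E,n}$ through $\div(\text{coeff}\cdot\Psi_{E,n})$ and that $\omega(\epsilon_0+\epsilon_c)+i(\sigma_0+\sigma_c)-(\omega\epsilon_0+i\sigma_0)=\omega\epsilon_c+i\sigma_c$ is compactly supported --- wait, this needs care, so in fact the right reading of \eqref{eq:Vttc} is that $\tilde{\tilde V}_{\omega,c}$ carries the coefficient \emph{differences}; I would double-check against the decomposition $\tilde{\tilde V}_\omega=\tilde{\tilde V}_{\omega,0}+\tilde{\tilde V}_{\omega,c}+\tilde{\tilde V}_{\omega,\delta}$ and conclude that each coefficient appearing in $\tilde{\tilde V}_{\omega,c}$ is compactly supported, so that $\div((\omega\epsilon_c+i\sigma_c)\Psi_{E,n})=\div((\omega\epsilon_c+i\sigma_c)\chi_K\Psi_{E,n})$ and the $\dot H^{-1}$-norm of this is controlled by $\|\chi_K\Psi_{E,n}\|_{L^2}$ times a constant, hence converges;
\item the second component $-i\omega h((\mu_0+\mu_c)\Psi_{H,n})$: similarly $h(\mu_c\Psi_{H,n})=h(\mu_c\chi_K\Psi_{H,n})$ converges in $(\nabla\dot H^1)'$ since $|\langle h(\mu_c\chi_K\Psi_{H,n}),\nabla q\rangle|\le\|\mu_c\|_\infty\|\chi_K\Psi_{H,n}\|_{L^2}\|\nabla q\|_{L^2}$;
\item the third component $-i\omega T_N(\mu_c\Psi_{H,n})=-i\omega T_N(\mu_c\chi_K\Psi_{H,n})$ converges in $\dot X_N(\Omega)/\kn$ by boundedness of $T_N$ (Lemma \ref{T12def});
\item the fourth component $iT_T((\omega\epsilon_c+i\sigma_c)\Psi_{E,n})=iT_T((\omega\epsilon_c+i\sigma_c)\chi_K\Psi_{E,n})$ converges in $\dot X_T(\Omega)/\kt$ by boundedness of $T_T$.
\end{itemize}
Thus every component of $\tilde{\tilde V}_{\omega,c}u_n$ has a convergent subsequence, and by a diagonal argument $\tilde{\tilde V}_{\omega,c}$ maps bounded sequences to relatively compact ones, i.e.\ it is compact.

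The main obstacle --- and the only genuinely nontrivial ingredient --- is the compact embedding of $\Hcurl\cap\Hdivzerozero$ (and $\Hocurl\cap\Hdivzero$) into $L^2_{\mathrm{loc}}$, since $\Omega$ is merely a Lipschitz domain and may be unbounded. For bounded Lipschitz $\Omega$ this is the Weber--Weck--Picard compactness theorem; for unbounded $\Omega$ the global embedding fails, but the \emph{localised} version --- $\chi_K\Psi_n$ relatively compact in $L^2$ for $\Psi_n$ bounded in $\Hcurl\cap\Hdivzero$ and $K$ compact --- still holds, because one can multiply by a smooth cutoff $\chi$ that is $1$ on $K$ and supported in a slightly larger bounded Lipschitz set $K'\Subset\Omega$, observe that $\chi\Psi_n$ is bounded in $H(\curl,K')\cap H(\div,K')$ (the divergence of $\chi\Psi_n$ picking up only the bounded term $\nabla\chi\cdot\Psi_n$, and the normal trace on $\partial K'$ being zero since $\chi$ vanishes there), and apply the bounded-domain theorem on $K'$. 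I would state this localised compact embedding as a separate lemma or cite it; everything else in the proof is bookkeeping of the kind indicated above.
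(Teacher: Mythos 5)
Your overall strategy matches the paper exactly: multiply the $\Psi$-fields by a cutoff supported where the compactly supported coefficient perturbations live, invoke the Weber compact embedding $\mathcal H_0(\curl,\Omega_R)\cap \mathcal H(\div,\Omega_R)\hookrightarrow L^2(\Omega_R)$ on the bounded subdomain, and then note that $\div$, $h$, $T_N$, $T_T$ are bounded so that postcomposing a compact operator with them stays compact.

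However, there is a genuine gap in your treatment of the first two components. You flag the worry yourself and then resolve it incorrectly: you ``would double-check\ldots and conclude that each coefficient appearing in $\tilde{\tilde V}_{\omega,c}$ is compactly supported.'' That is false. The displayed formula for $\tilde{\tilde V}_{\omega,c}$ genuinely contains $\epsilon_0+\epsilon_c$, $\sigma_0+\sigma_c$, $\mu_0+\mu_c$ in the first two rows --- the constants are there. The correct resolution is the paper's observation that $\div\bigl((\omega\epsilon_0+i\sigma_0)\Psi_E\bigr)=0$ and $h(\mu_0\Psi_H)=0$ identically: $\omega\epsilon_0+i\sigma_0$ and $\mu_0$ are scalar constants, $\Psi_E$ is divergence-free, and $\Psi_H\in\Hdivzerozero$ is orthogonal to $\nabla\dot H^1(\Omega)$ by \eqref{mm:11}, which is exactly the statement that $h(\mu_0\Psi_H)=0$. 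Only after this does the first component reduce to $i\div\bigl((\omega\epsilon_c+i\sigma_c)\Psi_E\bigr)$ and the second to $-i\omega h(\mu_c\Psi_H)$, at which point your cutoff argument applies. Without that identity, rewriting $\epsilon_0+\epsilon_c$ as $\epsilon_c$ is unjustified, and subtracting the constant piece --- as you correctly sensed --- would land you with a non-compact operator.

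A minor additional point: you insist on $K'\Subset\Omega$, which fails when $\supp(\epsilon_c)$ (say) touches $\partial\Omega$. One should instead take the cutoff supported in $B(0,R)\cap\overline\Omega$ with $\chi\equiv 1$ on the coefficient support, so that $\Omega_R=B(0,R)\cap\Omega$ is allowed to have part of $\partial\Omega$ in its boundary. On $\partial\Omega\cap B(0,R)$ the relevant boundary condition (tangential for $\Psi_E$, normal for $\Psi_H$) comes from the field; on $\partial B(0,R)\cap\Omega$ it comes from $\chi$ vanishing. Your parenthetical ``the normal trace on $\partial K'$ being zero'' is the wrong trace for the $\Psi_E$ case and does not engage with the portion of the boundary inherited from $\partial\Omega$.
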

\begin{proof}
By a direct calculation it is easy to see that $\div((\omega\epsilon_0+i\sigma_0)\Psi_E)=0$ and $h(\mu_0\Psi_H)=0$, using that $\epsilon_0$, $\sigma_0$ and $\mu_0$ are scalar. Since the operators
\begin{equation}\label{eq:4operators}
\begin{aligned}
 \div\colon L^2&(\Omega;\C^3)\to \dot H^{-1}(\Omega;\C),&
T_T\colon L^2&(\Omega;\C^3)\to \dot X_T(\Omega)/\kt,\\
 h \colon L^2&(\Omega;\C^3)\to \bigl(\nabla\dot H^{1}(\Omega;\C)\bigr)',&
T_N\colon L^2&(\Omega;\C^3)\to \dot X_N(\Omega)/\kn,
\end{aligned}
\end{equation}
are bounded, it is enough to show that the operators
\begin{align*}
&F_T\colon \Hocurl\cap  \curl (\dot X_T(\Omega)/\kt)\to L^2(\Omega;\C^3),\qquad \Psi_E\mapsto (\omega\epsilon_c+i\sigma_c)\Psi_E,\\
&F_N\colon \Hcurl\cap  \curl (\dot X_N(\Omega)/\kn)\to L^2(\Omega;\C^3),\qquad \Psi_H\mapsto\mu_c\Psi_H,
\end{align*}
are compact. We now prove that $F_T$ is compact, the other proof is completely analogous. Let $R>0$ be big enough so that $K:=\supp(\omega\epsilon_c+i\sigma_c)\subseteq B(0,R)\cap\overline\Omega$ and $\chi\in C^\infty(\Omega)$ be a cutoff function such that $\chi\equiv 1$ in $K$ and $\supp \chi\subseteq B(0,R)\cap\overline\Omega$. Setting $\Omega_R=B(0,R)\cap\Omega$, the operator $F_T$ may be expressed via the following compositions
\[
\begin{array}{cc}
\Hocurl\cap  \curl (\dot X_T(\Omega)/\kt)  &  \Psi_E\\
\downarrow &\downmapsto\\
 \mathcal H_0(\curl,\Omega_R)\cap \mathcal H(\div,\Omega_R) & (\chi\Psi_E)|_{\Omega_R} \\
\hookdownarrow &\downmapsto\\
L^2(\Omega_R;\C^3) & (\chi\Psi_E)|_{\Omega_R}\\
\downarrow &\downmapsto\\
L^2(\Omega_R;\C^3) &( (\omega\epsilon_c+i\sigma_c)\Psi_E)|_{\Omega_R} \\
\hookdownarrow&\downmapsto\\
L^2(\Omega;\C^3) &(\omega\epsilon_c+i\sigma_c)\Psi_E \\
\end{array},
\]
where the third operator is  the multiplication by $ \omega\epsilon_c+i\sigma_c$ {and the fourth operator is simply the extension by zero}.
Therefore, since the embedding $\mathcal H_0(\curl,\Omega_R)\cap \mathcal H(\div,\Omega_R) \hookrightarrow L^2(\Omega_R;\C^3)$ is compact  \cite{weber1980} (see also \cite[Theorem 2.8]{AMROUCHE-BERNARDI-DAUGE-GIRAULT-1998}), the operator $F_T$ is compact.
\end{proof}
}

\begin{lem}\label{lemma:mm4.4} There exists a constant $C>0$ depending only on $\Omega$ and on the coefficients $\mu$, $\epsilon$ and $\sigma$, such that
for each $\delta>0$ {we have}
\[ \| \tilde{\tilde{V}}_{\omega,\delta} \|_{\Hdone_2\to\Hdone_3} \leq C (1+|\omega|)\delta. \]
\end{lem}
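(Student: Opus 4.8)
The plan is to bound each of the eight scalar entries of $\tilde{\tilde{V}}_{\omega,\delta}$ appearing in the matrix \eqref{eq:Vttd} separately, using that the four ``outer'' operators $\div$, $h$, $T_N$, $T_T$ listed in \eqref{eq:4operators} are bounded, say with norms at most $C_0$, and that the projections of the input $(\nabla q_E,\nabla q_H,\Psi_E,\Psi_H)$ onto its components have $L^2$-norms controlled by $\norm{(\nabla q_E,\nabla q_H,\Psi_E,\Psi_H)}_{\Hdone_2}$. First I would recall from \eqref{eq:mm4.1} that $\norm{\epsilon_\delta}_{L^\infty}, \norm{\sigma_\delta}_{L^\infty}, \norm{\mu_\delta}_{L^\infty} \le m_\delta < \delta$, so that the multiplication operators $F\mapsto (\omega\epsilon_\delta + i\sigma_\delta)F$ and $F\mapsto \mu_\delta F$ are bounded on $L^2(\Omega;\C^3)$ with norms at most $(|\omega|+1)m_\delta \le (1+|\omega|)\delta$ and $m_\delta < \delta$ respectively.

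Next I would go through the entries in order. For the first entry, $i\div((\omega\epsilon_\delta + i\sigma_\delta)\Psi_E)$: we have $\norm{\div((\omega\epsilon_\delta+i\sigma_\delta)\Psi_E)}_{\dot H^{-1}} \le C_0 (1+|\omega|)\delta \norm{\Psi_E}_{L^2} \le C_0(1+|\omega|)\delta \norm{\Psi_E}_{\Hcurl}$. The second entry $-i\omega h(\mu_\delta \Psi_H)$ is bounded in $(\nabla\dot H^1)'$ by $|\omega| C_0 \delta \norm{\Psi_H}_{L^2} \le C_0(1+|\omega|)\delta\norm{\Psi_H}_{\Hcurl}$. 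The third entry $-i\omega T_N(\mu_\delta\nabla q_H) - i\omega T_N(\mu_\delta\Psi_H)$ is bounded in $\dot X_N(\Omega)/\kn$ by $|\omega|C_0\delta(\norm{\nabla q_H}_{L^2} + \norm{\Psi_H}_{L^2}) \le 2C_0(1+|\omega|)\delta \norm{(\nabla q_E,\nabla q_H,\Psi_E,\Psi_H)}_{\Hdone_2}$, using $\norm{\nabla q_H}_{L^2} \le \norm{(\cdots)}_{\Hdone_2}$ and $\norm{\Psi_H}_{L^2}\le\norm{\Psi_H}_{\Hcurl}$. The fourth entry $iT_T((\omega\epsilon_\delta+i\sigma_\delta)\nabla q_E) + iT_T((\omega\epsilon_\delta+i\sigma_\delta)\Psi_E)$ is handled identically, bounded in $\dot X_T(\Omega)/\kt$ by $2C_0(1+|\omega|)\delta \norm{(\cdots)}_{\Hdone_2}$. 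Summing the squares of these four bounds and taking the square root, using the definition \eqref{eq:normH2} of the norm on $\Hdone_2$ and the analogous product norm on $\Hdone_3$, gives $\norm{\tilde{\tilde{V}}_{\omega,\delta}(\nabla q_E,\nabla q_H,\Psi_E,\Psi_H)}_{\Hdone_3} \le C(1+|\omega|)\delta \norm{(\nabla q_E,\nabla q_H,\Psi_E,\Psi_H)}_{\Hdone_2}$ for a constant $C$ depending only on $C_0$ (hence on $\Omega$) and on the coefficients; this is the claimed estimate.

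There is really no serious obstacle here: the lemma is an exercise in tracking constants through the decomposition, and the only subtlety is bookkeeping, namely making sure that (i) the norm on $\Hcurl$ dominates the $L^2$ norm of its argument (which is trivial since $\norm{u}_{\Hcurl}^2 = \norm{u}_{L^2}^2 + \norm{\curl u}_{L^2}^2$), and (ii) each of the four target-space norms on $\Hdone_3$ is the one against which the corresponding outer operator in \eqref{eq:4operators} was declared bounded. The factor $(1+|\omega|)$ rather than $|\omega|$ is needed only to absorb the $i\sigma_\delta$ term uniformly in $\omega$ near $0$; everything else is linear in the size $m_\delta < \delta$ of the small part of the coefficients.
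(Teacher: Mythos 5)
Your proof is correct and follows essentially the same route as the paper: bound the input's $L^2$ norm by its $\Hdone_2$ norm, use the boundedness of the four outer operators in \eqref{eq:4operators} together with the $L^\infty$ smallness \eqref{eq:mm4.1} of $\epsilon_\delta$, $\sigma_\delta$, $\mu_\delta$, and collect the entrywise bounds. (A minor slip: there are six nonzero entries in \eqref{eq:Vttd}, not eight, but this has no bearing on the argument.)
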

\begin{proof}
{
Note that by \eqref{eq:normH2} we have
\[
\|(\nabla q_E,\nabla q_H, \Psi_E,\Psi_H)\|_{L^2(\Omega;\C^3)^4}
\le \|(\nabla q_E,\nabla q_H, \Psi_E,\Psi_H)\|_{\Hdone_2}.
\]
Thus, since the four operators in \eqref{eq:4operators} are bounded, there exists a constant $C>0$ depending only on $\Omega$ and on the coefficients $\mu$, $\epsilon$ and $\sigma$, such that
\[
\begin{split}
\| \tilde{\tilde V}_{\omega,\delta}(\nabla q_E,\nabla q_H, \Psi_E,\Psi_H)\|_{\Hdone_3}
& \le C(1+|\omega|) m_\delta \|(\nabla q_E,\nabla q_H, \Psi_E,\Psi_H)\|_{L^2(\Omega;\C^3)^4}\\
& \le C(1+|\omega|) \delta \|(\nabla q_E,\nabla q_H, \Psi_E,\Psi_H)\|_{\Hdone_2},
\end{split}
\]
where the second inequality follows from \eqref{eq:mm4.1}. This concludes the proof.
}
\end{proof}

It is helpful to recall that $\tilde{\tilde{V}}_{\omega}\colon \Hdone_{2}\to \Hdone_3 $, where
\begin{multline*}
\Hdone_{2}=\nabla \dot{H}^1_0(\Omega)\times\nabla \dot{H}^1(\Omega)\\ \times   \Hocurl\cap  \curl (\dot X_T(\Omega)/\kt)
\times\Hcurl\cap \curl(\dot X_N(\Omega)/\kn)
\end{multline*}
and
\[
\Hdone_3= \dot{H}^{-1}(\Omega;\C)\times\left(\nabla \dot{H}^{1}(\Omega;\C)\right)'\times(\dot X_N(\Omega)/\kn)\times(\dot X_T(\Omega)/\kt).
\]

\begin{prop}\label{prop:mm4.3} The \textcolor{black}{essential} spectrum of  {$\tilde{\tilde{V}}_{\omega}$} is the union of the \textcolor{black}{essential} spectra of the two block operator pencils {
\begin{align*}
\mathcal A_\omega&\colon   \nabla \dot{H}^1_0(\Omega)\times\nabla \dot{H}^1(\Omega)\to 
\dot{H}^{-1}(\Omega;\C)\times\left(\nabla \dot{H}^{1}(\Omega;\C)\right)' \\
\mathcal D_\omega& \colon  \Hocurl\cap  \curl (\dot X_T(\Omega)/\kt)
\times\Hcurl\cap \curl(\dot X_N(\Omega)/\kn) \\
&\to (\dot X_N(\Omega)/\kn)\times(\dot X_T(\Omega)/\kt)
\end{align*}
}
determined
by the expressions
\begin{equation}\label{Adef}
 \mathcal A_\omega 
=
\begin{pmatrix}  i\div((\omega\epsilon+i\sigma) \,\cdot ) & 0  \\ 0 &- i\omega h(\mu \, \cdot)  \end{pmatrix}
,\quad \mathcal D_\omega 
=
\begin{pmatrix} I_N &- i\omega\mu_0 T_N \\ i(\omega\epsilon_0+i\sigma_0) T_T & I_T \end{pmatrix}.
\end{equation}
Here the operators $I_N$ and $I_T$ are the canonical mappings from $\dot X_N(\Omega)$ and $\dot X_T(\Omega)$ to the quotient spaces $\dot X_N(\Omega)/\kn$ and
$\dot{X}_T(\Omega)/\kt$ respectively.
\end{prop}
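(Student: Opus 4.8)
The plan is to analyse the block structure of $\tilde{\tilde{V}}_{\omega,0}$ and show that, modulo compact perturbations, it decouples into the two pencils $\mathcal A_\omega$ and $\mathcal D_\omega$. First I would observe that by Lemma~\ref{prop:mm4.1} and Lemma~\ref{lemma:mm4.4}, the operators $\tilde{\tilde{V}}_{\omega,c}$ and $\tilde{\tilde{V}}_{\omega,\delta}$ do not affect the essential spectrum: the former because it is compact, the latter because (by letting $\delta\to 0$) it is a small perturbation, and the set of semi-Fredholm operators with finite-dimensional kernel is open. Hence $\sigma_{ess}(\tilde{\tilde{V}}_\omega) = \sigma_{ess}(\tilde{\tilde{V}}_{\omega,0})$, and it suffices to work with the explicit expression \eqref{eq:Vtt0}.

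Next I would examine \eqref{eq:Vtt0} and identify its $4\times 4$ block-operator-matrix structure with respect to the splitting of $\Hdone_2$ into $(\nabla q_E,\nabla q_H)$ and $(\Psi_E,\Psi_H)$. The key point is that every off-diagonal entry coupling the gradient block to the potential block involves either a compactly supported coefficient ($\epsilon_c$, $\sigma_c$, $\mu_c$) or one of the operators $T_N$, $T_T$ applied to such a term, and by the same argument as in Lemma~\ref{prop:mm4.1} (restriction to a ball, multiplication by a compactly supported coefficient, compact embedding of $\mathcal H_0(\curl)\cap\mathcal H(\div)$ into $L^2$) these entries are compact. After discarding them, $\tilde{\tilde{V}}_{\omega,0}$ becomes block-diagonal, with the $(q_E,q_H)$-block equal to $\mathcal A_\omega$ and the $(\Psi_E,\Psi_H)$-block equal to $\mathcal D_\omega$ (noting that $[\Psi_E]$, $[\Psi_H]$ give the identity maps $I_N$, $I_T$ into the quotients, and that $T_N(\mu_0\Psi_H)$, $T_T((\omega\epsilon_0+i\sigma_0)\Psi_E)$ retain their constant coefficients). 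Then I would invoke the elementary fact that the essential spectrum of a block-diagonal operator pencil is the union of the essential spectra of the diagonal blocks --- which in the present closed-range setting is immediate from the Fredholm characterisation in Remark~\ref{rem:essential}\ref{r2c}, since a block-diagonal operator is Fredholm if and only if each diagonal block is.

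The main obstacle I expect is the careful verification that the specific off-diagonal entries in \eqref{eq:Vtt0} are genuinely compact. The terms $i\,\div((\omega(\epsilon_0+\epsilon_c)+i(\sigma_0+\sigma_c))\nabla q_E) - i\,\div((\omega\epsilon+i\sigma)\nabla q_E)$ collapse to zero by the decomposition, but one must be attentive to which pieces of $\mathcal A_\omega$ carry the full coefficient $\mu$ or $\omega\epsilon+i\sigma$ versus only the constant part: the diagonal of $\tilde{\tilde{V}}_{\omega,0}$ keeps the full coefficients in the first two rows but only $\mu_0$, $\epsilon_0$, $\sigma_0$ (plus compactly supported corrections that we throw away) in the last two. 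One must also check that $T_N(\mu_c\nabla q_H)$ and $T_T((\omega\epsilon_c+i\sigma_c)\nabla q_E)$, and likewise $T_N((\mu_0+\mu_c)\nabla q_H)$ minus $T_N(\mu_0\nabla q_H)$, etc., are compact --- again via the compact-support-plus-compact-embedding argument, using that $\nabla q_H\in\nabla\dot H^1(\Omega)$ is bounded in $L^2$ and restriction to a ball followed by multiplication lands in a space compactly embedded in the target quotient. Once these compactness claims are in hand, the decoupling is purely formal and the conclusion follows.
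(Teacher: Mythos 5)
Your proposal contains a real gap: the off-diagonal block $\mathcal C_\omega$ coupling the gradient block to the potential block is \emph{not} compact in general, so you cannot simply discard it and pass to a block-diagonal pencil. Concretely, $\mathcal C_\omega$ acts on $(\nabla q_E,\nabla q_H)$ via terms such as $\nabla q_H\mapsto T_N\bigl((\mu_0+\mu_c)\nabla q_H\bigr)$; the constant piece vanishes (since $T_N\nabla(\mu_0 q_H)=0$), leaving $T_N(\mu_c\nabla q_H)$. The compactness argument of Lemma~\ref{prop:mm4.1} hinges on the compact embedding $\mathcal H_0(\curl,\Omega_R)\cap\mathcal H(\div,\Omega_R)\hookrightarrow L^2(\Omega_R)$, which needs control of \emph{both} $\curl$ and $\div$ of the input. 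For $\Psi_E$ this is available (it is a curl, hence divergence-free). For $\nabla q_H$ it is not: $\curl\nabla q_H=0$, but $\div\nabla q_H=\Delta q_H$ lives only in $\dot H^{-1}$, so after localising by a cutoff the resulting field does not land in a space compactly embedded in $L^2$. A concrete obstruction: take $\Omega=\R^3$, $\mu_c=\rho(x)\,R$ with $\rho$ a compactly supported bump and $R$ a fixed rotation; for a Weyl sequence $q_H^{(n)}$ with $\nabla q_H^{(n)}\approx -\chi(x)\sin(n x_1)\,e_1$ (normalised, $\rightharpoonup 0$), the field $\mu_c\nabla q_H^{(n)}\approx -\rho\chi\sin(nx_1)\,Re_1$ points transverse to its dominant frequency $\pm n e_1$, so its divergence-free part has $L^2$ norm bounded below, and $\|T_N(\mu_c\nabla q_H^{(n)})\|$ does not tend to $0$. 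Thus $\mathcal C_\omega$ is genuinely not compact when $\mu_c$ is matrix-valued.

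The paper never claims $\mathcal C_\omega$ is compact. Instead it observes that $\tilde{\tilde V}_{\omega,0}$ is block \emph{lower triangular}, and uses this structure directly: in one direction, invertibility of $\mathcal A_\omega+K_1$ and $\mathcal D_\omega+K_2$ yields an explicit triangular inverse whose norm is controlled independently of $\delta$, so small $\tilde{\tilde V}_{\omega,\delta}$ and compact $\tilde{\tilde V}_{\omega,c}$ can then be absorbed; in the other direction, singular sequences for $\mathcal D_\omega$ lift trivially as $(0,v_n)$, and singular sequences for $\mathcal A_\omega$ lift as $(u_n,-(\mathcal D_\omega+K')^{-1}\mathcal C_\omega u_n)$, the compactness of $K'$ killing the second component. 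A secondary point: the decomposition \eqref{eq:mm4.14} itself depends on $\delta$ (the split between $\mu_c$ and $\mu_\delta$ changes), so there is no single $\tilde{\tilde V}_{\omega,0}$ to which you can ``send $\delta\to 0$''; the paper deals with this by fixing the $\delta$-\emph{independent} diagonal blocks $\mathcal A_\omega,\mathcal D_\omega$ and bounding the $\delta$-dependent pieces uniformly. Your plan would need to be restructured along these lines, rather than via a compactness claim on $\mathcal C_\omega$.
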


\begin{proof}
By inspection of (\ref{eq:Vtt0}), the operator pencil $\tilde{\tilde V}_{\omega,0}$ may be written as the
block lower triangular operator matrix pencil
\begin{equation}\label{eq:ACD}
\tilde{\tilde V}_{\omega,0}= 
\begin{pmatrix}
 {\mathcal A_\omega} & 0 \\  {\mathcal C_\omega} & {\mathcal D_\omega}
\end{pmatrix}
,
\end{equation}
in which $\mathcal A_\omega$ and $\mathcal D_\omega$ are as in equation \eqref{Adef} and  the off-diagonal component ${\mathcal C_\omega}\colon \nabla \dot{H}^1_0(\Omega)\times\nabla \dot{H}^1(\Omega)\to  (\dot X_N(\Omega)/\kn)\times(\dot X_T(\Omega)/\kt)$ is given by 
\[ 
{\mathcal C_\omega} =
\begin{pmatrix}
0 &- i\omega T_N((\mu_0+\mu_c)\cdot)  \\
    iT_T ((\omega(\epsilon_0+\epsilon_c)+i(\sigma_0+\sigma_c))\cdot) & 0
\end{pmatrix}
.
 \]
{Take $\omega\notin\sigma_{ess}(\mathcal A_\omega)\cup \sigma_{ess}(\mathcal D_\omega)$. By Remark~\ref{rem:essential}\ref{r2c}, there exist compact operators $K_1$ and $K_2$ such that $\mathcal A_\omega+K_1$ and $\mathcal D_\omega+K_2$ are invertible. Thus, $\tilde{\tilde V}_{\omega,0}+\left(\begin{smallmatrix}
K_1 &0  \\
    0 & K_2
\end{smallmatrix}\right)$ is invertible, and its inverse is given by 
\[
\left(\tilde{\tilde V}_{\omega,0}+\begin{pmatrix}
K_1 &0  \\
    0 & K_2
\end{pmatrix}\right)^{-1}=
\begin{pmatrix}
(\mathcal A_\omega+K_1)^{-1} &0  \\
    -(\mathcal D_\omega+K_2)^{-1}\mathcal C_\omega (\mathcal A_\omega+K_1)^{-1}  & (\mathcal D_\omega+K_2)^{-1}
\end{pmatrix}.
\]
Since $\mathcal A_\omega$ and $\mathcal D_\omega$ are independent of $\delta$ and the operator norm of $\mathcal C_\omega$ may be bounded independently of $\delta$, this inverse is bounded in norm by a constant independent of $\delta$. As a consequence, by Lemma~\ref{lemma:mm4.4} there exists $\delta>0$ such that $\tilde{\tilde V}_{\omega,0}+\left(\begin{smallmatrix}
K_1 &0  \\
    0 & K_2
\end{smallmatrix}\right) + \tilde{\tilde V}_{\omega,\delta}$ is invertible. Using again Remark~\ref{rem:essential}\ref{r2c}, we obtain that $\omega\notin\sigma_{ess}(\tilde{\tilde V}_{\omega,0}+\tilde{\tilde V}_{\omega,\delta})$. Finally, Lemma~\ref{prop:mm4.1} yields $\omega\notin\sigma_{ess}(\tilde{\tilde V}_{\omega,0}+\tilde{\tilde V}_{\omega,\delta}+\tilde{\tilde V}_{\omega,c})=\sigma_{ess}(\tilde{\tilde V}_{\omega})$, since the essential spectrum  is invariant under compact perturbations.

Now take $\omega\notin\sigma_{ess}(\tilde{\tilde V}_{\omega})$. By Remark~\ref{rem:essential}\ref{r2c} there exists a compact operator $K$ such that $\tilde{\tilde V}_{\omega} + K$ is invertible. By Lemma~\ref{lemma:mm4.4} there exists $\delta>0$ such that $\tilde{\tilde V}_{\omega,0}+\tilde{\tilde V}_{\omega,c}+K=\tilde{\tilde V}_{\omega} + K-\tilde{\tilde V}_{\omega,\delta}$ is invertible, whence $\omega\notin\sigma_{ess}(\tilde{\tilde V}_{\omega,0}+\tilde{\tilde V}_{\omega,c})=\sigma_{ess}(\tilde{\tilde V}_{\omega,0})$. Assume by contradiction that 
$\omega\in\sigma_{ess}(\mathcal A_\omega)\cup \sigma_{ess}(\mathcal D_\omega)$. If $\omega\in \sigma_{ess}(\mathcal D_\omega)$, by Remark~\ref{rem:essential}\ref{r2a} there exists a singular sequence $(v_n)_n$ for $\mathcal D_\omega$, and so  $(0,v_n)_n$ is a singular sequence for $\tilde{\tilde V}_{\omega,0}$ by \eqref{eq:ACD}, which implies that $\omega\in\sigma_{ess}(\tilde{\tilde V}_{\omega,0})$, a contradiction. Otherwise, if $\omega\in \sigma_{ess}(\mathcal A_\omega)\setminus\sigma_{ess}(\mathcal D_\omega)$, there exists a compact operator $K'$ such that $\mathcal D_\omega+K'$ is invertible and a singular sequence $(u_n)_n$ for $\mathcal A_\omega$. A direct calculation then shows that $(u_n,-(\mathcal D_\omega+K')^{-1}\mathcal C_\omega u_n)_n$ is a singular sequence for $\tilde{\tilde V}_{\omega,0}$,  which again implies that $\omega\in\sigma_{ess}(\tilde{\tilde V}_{\omega,0})$. 
} \end{proof}

\begin{remark}
The text \cite{MR2463978} contains many interesting results on essential spectra of block-operator matrices and pencils;  Theorem 2.4.1 is very close to what
we would need, but our pencil ${\tilde{\tilde V}}_{\omega,0}$ is lower triangular rather than diagonally dominant.
\end{remark}

{We are now ready to prove our main result.}

\begin{proof}[Proof of Theorem \ref{thm:adv}]
We commence the proof by observing the following  {identity}:
\begin{equation}\label{chain} \sigma_{ess}(V_\omega) = \sigma_{ess}({\mathcal A}_\omega)\cup \sigma_{ess}({\mathcal D}_\omega). \end{equation}
This is an immediate consequence of Lemmas~\ref{lem:equivalence2} and \ref{lem:equivalence3} and of Proposition \ref{prop:mm4.3}. 
We now consider $\sigma_{ess}({\mathcal A}_\omega)$ and $ \sigma_{ess}({\mathcal D}_\omega)$ in more detail.

The essential spectrum of ${\mathcal A}_\omega$ consists of the point $\{ 0 \}$, arising from the $(2,2)$ diagonal entry of ${\mathcal A}_\omega$, which has $\omega=0$ as
an eigenvalue of infinite multiplicity and is otherwise invertible;  and of the essential spectrum of the pencil in the $(1,1)$ entry, which is as stated
in the theorem, {namely
\begin{equation}\label{eq:Aomega} 
\sigma_{ess}({\mathcal A}_\omega) = \{0\}\cup\sigma_{ess} (\div((\omega\epsilon+i\sigma)\nabla\,\cdot\,)).
\end{equation}
}

In order to deal with the essential spectrum of ${\mathcal D}_\omega$ we observe that if we replace $V_\omega$ by a new pencil $V_\omega^0$  in which the coefficients have the constant
values $\epsilon_0$, $\mu_0$ and $\sigma_0$, then ${\mathcal D}_\omega$ will be unchanged while ${\mathcal A}_\omega$ will be replaced by a pencil ${\mathcal A}_{\omega,0}$ in which all the coefficients are constant.  For the constant coefficient pencil ${\mathcal A}_{\omega,0}$ we see that $0$ lies in the essential spectrum as we reasoned before,
while the $(1,1)$ term is invertible and Fredholm precisely when $\omega\epsilon_0+i\sigma_0\neq 0$, by the Babu\v{s}ka-Lax-Milgram theorem; hence
$ \sigma_{ess}({\mathcal A}_{\omega,0}) =  \{0,-i\sigma_0/\epsilon_0\}.$
Using (\ref{chain}) for the constant coefficient pencil, we now have
\begin{equation}\label{cheaptrick} 
 \sigma_{ess}(V_\omega^0) = \{0,-i\sigma_0/\epsilon_0\}\cup \sigma_{ess}({\mathcal D}_\omega). 
\end{equation}

We now prove that the essential spectrum of ${\mathcal A}_\omega$ already contains $\{ 0, -i\sigma_0/\epsilon_0\}$. The $(2,2)$ component
has $0$ as an eigenvalue of infinite multiplicity.  If $\Omega$ is bounded, we have $\sigma_0=0$ and so the claim is proven. Otherwise,   for the point $-i\sigma_0/\epsilon_0$ we observe that by the hypothesis (\ref{eq:vai}), given $n>0$ there exists $R_n>0$
such that if $\omega_0 := -i\sigma_0/\epsilon_0$ then
\[ \sup_{| x | \geq R_n}  \| \omega_0 \epsilon(x) + i \sigma(x) \|_{2} < \frac{1}{n}. \]
Choosing any function $\phi_n\in C_0^\infty(\Omega)$ with support in $\{ x\in\Omega : | x| > R_n \}$, with $\| \nabla \phi_n \|_{L^2(\Omega)} = 1$, we see that
\[ \| \div((\omega_0\epsilon+i\sigma)\nabla \phi_n) \|_{\dot{H}^{-1}(\Omega)} \leq \frac{1}{n}. \]
Since the supports of the sequence $(\nabla \phi_n)_{n\in\mathbb N}$ move off to infinity, the sequence converges weakly to zero; it  is therefore a singular sequence in $\nabla \dot{H}^1_0(\Omega)$
for the $(1,1)$ element of ${\mathcal A}_{\omega_0}$. Thus $\omega_0$ lies in the essential spectrum of ${\mathcal A}_\omega$.
Combining the observations {\eqref{chain}, \eqref{eq:Aomega} and} \eqref{cheaptrick} with the fact that $\sigma_{ess}({\mathcal A}_\omega) \supseteq \{ 0, -i\sigma_0/\epsilon_0\}$ completes the proof.
\end{proof}

We conclude this section with a more explicit description of the essential spectrum of the divergence form operator $\div((\omega\epsilon+i\sigma) \nabla\,\cdot\, )$ in the case of continuous coefficients.

\begin{prop}\label{prop:mm164}
When the coefficients $\epsilon$ and $\sigma$ are continuous in $\overline\Omega$, the essential spectrum of $\div((\omega\epsilon+i\sigma) \nabla\,\cdot\, )$, acting 
from $\dot{H}^1_0(\Omega;\C)$ to $\dot{H}^{-1}(\Omega;\C)$, consists of the closure of the set of all  $\omega=i\nu$, $\nu\in {\mathbb R}$, such that $\nu\varepsilon+\sigma$ is indefinite
at some point in $\Omega$. Equivalently, when $\Omega$ is bounded, it is the set of $\omega=i\nu$, $\nu\in\mathbb R$, such that $\nu\varepsilon+\sigma$
is indefinite at some point in $\overline\Omega$.
\end{prop}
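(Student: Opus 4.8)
The plan is to split according to whether $\mathrm{Re}\,\omega$ vanishes. Writing $\omega=\lambda+i\nu$ with $\lambda,\nu\in\R$ we have $\omega\epsilon+i\sigma=\lambda\epsilon+iB_\nu$, where $B_\nu:=\nu\epsilon+\sigma$ is a continuous symmetric-matrix-valued field on $\overline\Omega$; throughout, call a symmetric matrix \emph{indefinite} when it is neither positive nor negative definite, and set $N:=\{\nu\in\R:B_\nu(x)\text{ is indefinite for some }x\in\Omega\}$. The claim is that the essential spectrum equals $\{i\nu:\nu\in\overline N\}$. When $\lambda\neq0$ the sesquilinear form $a_\omega(u,v)=\int_\Omega(\lambda\epsilon+iB_\nu)\nabla u\cdot\overline{\nabla v}$ satisfies $\mathrm{Re}\,a_\omega(u,u)=\lambda\int_\Omega\epsilon\nabla u\cdot\overline{\nabla u}$, whose absolute value is at least $|\lambda|\Lambda^{-1}\|\nabla u\|_{L^2}^2$; so $a_\omega$ (after a sign change if $\lambda<0$) is coercive on $\dot{H}^1_0(\Omega;\C)$, and by Lax--Milgram $\div((\omega\epsilon+i\sigma)\nabla\,\cdot\,)$ is an isomorphism onto $\dot{H}^{-1}(\Omega;\C)$, hence $\omega\notin\sigma_{ess}$. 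Thus $\sigma_{ess}\subseteq i\R$, and it remains to analyse $\omega=i\nu$.

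For the inclusion $i\overline N\subseteq\sigma_{ess}$, fix $\nu\in N$ with $B_\nu(x_\ast)$ indefinite at some $x_\ast\in\Omega$. The quadratic form $\xi\mapsto\xi\cdot B_\nu(x_\ast)\xi$ takes a non-positive and a non-negative value on the unit sphere, hence vanishes at some unit vector $\xi_0$. I would use a concentrating high-frequency quasimode $u_n(x)=e^{in\xi_0\cdot x}\chi\bigl((x-x_\ast)/\rho_n\bigr)$, with $\chi\in C_0^\infty$ fixed and $\rho_n\downarrow 0$, $n\rho_n\to\infty$ (so $u_n\in C_0^\infty(\Omega)$ for large $n$); after normalising in $\dot{H}^1_0$ one gets $\|u_n\|_{L^2}\to0$, hence $u_n\rightharpoonup0$. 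Testing $i\,\div(B_\nu\nabla u_n)$ against $v\in\dot{H}^1_0$ and splitting $B_\nu=B_\nu(x_\ast)+(B_\nu-B_\nu(x_\ast))$: the second summand contributes at most $\|B_\nu-B_\nu(x_\ast)\|_{L^\infty(B_{\rho_n}(x_\ast))}\to0$ by continuity of the coefficients; in the frozen summand the $O(n^2)$ principal term $-n^2(\xi_0\cdot B_\nu(x_\ast)\xi_0)u_n$ vanishes by the choice of $\xi_0$, and the $O(n)$ and $O(1)$ remainders, which oscillate at frequency $\sim n$ dominating the scale $\rho_n^{-1}$ of the cutoff derivatives, have $\dot{H}^{-1}(\R^3)$-norm of order $n^{-1}\|\cdot\|_{L^2}$; bookkeeping the powers of $n$ and $\rho_n$ then gives $\|i\,\div(B_\nu\nabla u_n)\|_{\dot{H}^{-1}(\Omega)}=o(\|\nabla u_n\|_{L^2})$, the bound in $\dot{H}^{-1}(\Omega)$ following from the one on $\R^3$ by zero-extension of test functions. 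So $(u_n)$ is a Weyl singular sequence (Remark~\ref{rem:essential}\ref{r2a}) and $i\nu\in\sigma_{ess}$. Since the set of $\omega$ for which $\div((\omega\epsilon+i\sigma)\nabla\,\cdot\,)$ is Fredholm is open and the pencil is affine (hence norm-continuous) in $\omega$, $\sigma_{ess}$ is closed, and $iN\subseteq\sigma_{ess}$ upgrades to $i\overline N\subseteq\sigma_{ess}$. (Alternatively one may localise the classical fact that $\div(A\nabla\,\cdot\,)$ on $\R^3$ with $A$ a constant indefinite matrix is non-Fredholm, its symbol $|\xi|^{-2}\,\xi\cdot A\xi$ vanishing on a cone.)

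For $\sigma_{ess}\subseteq i\overline N$, let $\omega=i\nu$ with $\nu\notin\overline N$ and choose $\delta>0$ with $(\nu-\delta,\nu+\delta)\cap N=\emptyset$. For $x\in\overline\Omega$ let $t_1(x)\ge t_3(x)\ge0$ be the largest and smallest solutions $t$ of $\det(\sigma(x)-t\epsilon(x))=0$; these are continuous because $\epsilon\ge\Lambda^{-1}I$, and relative to the $\epsilon(x)$-inner product $B_\mu(x)$ has eigenvalues $\mu+t_j(x)$, so $B_\mu(x)$ is indefinite exactly for $\mu\in[-t_1(x),-t_3(x)]$. Hence for every $x\in\Omega$ the interval $(\nu-\delta,\nu+\delta)$ is disjoint from $[-t_1(x),-t_3(x)]$, which forces either $t_3(x)\ge\delta-\nu$ (whence $B_\nu(x)\ge\delta\Lambda^{-1}I$) or $t_1(x)\le-\delta-\nu$ (whence $B_\nu(x)\le-\delta\Lambda^{-1}I$), the two alternatives being mutually exclusive. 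The closed sets $\{x\in\Omega:t_3(x)\ge\delta-\nu\}$ and $\{x\in\Omega:t_1(x)\le-\delta-\nu\}$ then partition the connected set $\Omega$, so one of them is all of $\Omega$: $B_\nu$ is uniformly sign-definite on $\Omega$, and Lax--Milgram makes $\div((\omega\epsilon+i\sigma)\nabla\,\cdot\,)$ an isomorphism, so $\omega\notin\sigma_{ess}$. Combined with the first paragraph this gives $\sigma_{ess}=i\overline N$. When $\Omega$ is bounded, $t_1,t_3$ are continuous on the compact set $\overline\Omega$, and a limiting argument along $x_k\in\Omega\to x\in\overline\Omega$ identifies $\overline N$ with $\bigcup_{x\in\overline\Omega}[-t_1(x),-t_3(x)]=\{\nu\in\R:\nu\epsilon+\sigma\text{ indefinite at some point of }\overline\Omega\}$, which is the second description.

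The step I expect to be the real obstacle is the quasimode estimate in the second paragraph: the concentration rate $\rho_n$ must be slow relative to the oscillation rate $n$ so that the principal symbol is genuinely annihilated and the cutoff-derivative remainders stay small in the \emph{negative}-order norm $\dot{H}^{-1}$, yet fast enough ($\rho_n\to0$) that freezing the coefficients at $x_\ast$ costs only $o(1)$; transferring the frequency-localisation bound from $\R^3$ to $\Omega$ is a minor additional point. Everything else reduces to Lax--Milgram, continuity of the generalised eigenvalues $t_1,t_3$, and connectedness of $\Omega$.
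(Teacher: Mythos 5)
Your proof is correct and rests on the same core ideas as the paper: Lax--Milgram when the relevant coefficient is uniformly sign-definite, and a concentrating oscillatory quasimode (oscillation direction $\xi_0$ chosen in the null cone of the frozen coefficient $B_\nu(x_\ast)$, cutoff scale shrinking slowly relative to the oscillation frequency) to produce a Weyl singular sequence when it is indefinite somewhere. Your parametrisation ($n\rho_n\to\infty$, normalise in $\dot H^1_0$) is a reparametrisation of the paper's $u_{r,\delta}=\chi_\delta\,r^{-1}e^{ir\theta\cdot x}$ with $r\delta^{5/2}\gg 1$; the paper carries out the $\dot H^{-1}$ bookkeeping explicitly via two integrations by parts (killing the leading $O(r^2)$ term by $\theta^T a_0\theta=0$), exactly the cancellation you describe, so the step you flag as ``the real obstacle'' is indeed doable and is the part the paper spells out. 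Where you genuinely add something is the third paragraph: the paper's proof only establishes \emph{(i)} uniformly definite $\Rightarrow$ not in $\sigma_{ess}$ and \emph{(ii)} indefinite somewhere $\Rightarrow$ in $\sigma_{ess}$, and then silently identifies the closure of $N$ with the complement of the uniformly-definite set; your argument via the continuous generalised eigenvalues $t_1(x)\ge t_3(x)\ge 0$ of $\det(\sigma-t\epsilon)=0$ and connectedness of $\Omega$ makes this identification explicit, and it also gives the bounded-domain reformulation in the last sentence of the statement (which the paper asserts without proof). Two small points worth tightening if you write this up: justify $\|v\|_{L^2(B_{2\rho_n})}\lesssim\rho_n\|\nabla v\|_{L^2(\Omega)}$ (e.g.\ via the $L^6$ Sobolev embedding of $\dot H^1_0(\Omega)$ in $\R^3$) when passing the cutoff-derivative remainders into $\dot H^{-1}$, and note that the weak convergence $u_n\rightharpoonup 0$ in $\dot H^1_0$ follows from $\|u_n\|_{L^2}\to 0$ together with boundedness in $\dot H^1_0$ and density of $C_0^\infty$ (the paper argues it directly by testing against smooth $v$).
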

\begin{proof}
If $\Re(\omega)\neq 0$ then the real part of $\omega\varepsilon+i\sigma$ is definite, and the result follows by the Lax-Milgram theorem. If $\omega=i\nu$ is purely imaginary, this reasoning
still works if $\nu\varepsilon+\sigma$ is {uniformly} definite {in $\Omega$}. It remains only to show that if $\nu\varepsilon+\sigma$ is indefinite at some point $x_0\in\Omega$, then $0$ lies in the essential spectrum of 
$\div((\omega\epsilon+i\sigma) \nabla\,\cdot\, )$.

We prove the result by constructing a Weyl singular sequence. Define $a:=\nu\varepsilon+\sigma$ and $a_0:= a(x_0)$. Let $\chi:[0,\infty)\mapsto [0,1]$ be a smooth cutoff
function such that $\chi(t)=1$ for $0\leq t \leq 1$ and $\chi(t)=0$ for all $t\geq 2$. Let $\theta\in {\mathbb R}^3$ be a unit vector chosen such that
$\theta^Ta(x_0)\theta=0$. For each sufficiently small $\delta>0$ and large $r>0$ let 
\begin{equation}\label{mm:urd} \chi_\delta(x) := \frac{1}{\delta^{3/2}}\chi\left(\frac{|x-x_0|}{\delta}\right), \qquad u_{r,\delta}(x) := \chi_\delta(x)r^{-1}\exp(ir\theta\cdot x). \end{equation}

A direct calculation shows that $\nabla u_{r,\delta}$ in sup-norm is $O(r^{-1}\delta^{-5/2}) + O(\delta^{-3/2})$. We suppose that $r\delta^\frac{5}{2} \gg 1$, so that the $\delta^{-3/2}$
term dominates; {we have $\| \nabla u_{r,\delta} \|_{L^\infty(B_{2\delta}(x_0))} = O(\delta^{-3/2})$} and
$\| u_{r,\delta} \|_{H^1_0(\Omega)} \ge c$ for some $c>0$ independent of $r$ and $\delta$. If $v$ is any smooth test function then 
\[ 
|\langle \nabla u_{r,\delta}, \nabla v \rangle|  \leq C\delta^3\| \nabla u_{r,\delta} \|_{L^\infty(B_{2\delta}(x_0))}\| \nabla v \|_{L^\infty(B_{2\delta}(x_0))} 
\leq C\| \nabla v \|_{\infty}  \delta^{3} \delta^{-3/2}= O(\delta^{3/2}),
 \]
so that the $u_{r,\delta}$ tend to zero weakly in $H^1_0(\Omega)$ as $r\nearrow+\infty$ and $\delta\searrow 0$, {with $r \ge \delta^{-\frac{5}{2}}$}.

To complete the proof that $0$ lies in the essential spectrum of our operator we show that $\| \div(a\,\nabla u_{r,\delta})\|_{H^{-1}(\Omega)}$ can be made arbitrarily small.  It is easy to see that 
\begin{equation}\label{mm:ws1} \| \div(a\,\nabla u_{r,\delta})\|_{H^{-1}(\Omega)} \leq \| a - a_0 \|_{L^\infty(B_{2\delta}(x_0))} + \sup_{v\in H^1_0(\Omega)\setminus\{0\}}
\hspace{-4mm}
 \frac{\left| \langle a_0\,\nabla u_{r,\delta},\nabla v \rangle\right|}{\| v \|_{H^1_0(\Omega)}}. \end{equation}
We compute $\nabla u_{r,\delta} $ by direct differentiation of eqn.\ (\ref{mm:urd}) and deduce that for each $v\in H^1_0(\Omega)$,
\[
\begin{split} 
\left| \langle a_0\,\nabla u_{r,\delta} ,\nabla v \rangle\right| & \leq  \left|\left\langle a_0 \frac{1}{r\delta^\frac{5}{2}}\frac{x-x_0}{|x-x_0|}\chi'\left(\frac{|x-x_0|}{\delta}\right)\exp(ir\theta\cdot x),\nabla v \right\rangle \right| \\
&   \qquad + \left|\left\langle \chi_\delta \, a_0\nabla\left(r^{-1}\exp(ir\theta\cdot x)\right),\nabla v \right\rangle\right| \\
& \leq  {C} \frac{|a_0|}{r\delta}\| v \|_{H^1_0(\Omega)} + \left|\left\langle \div(\chi_\delta \, a_0 \, \nabla\left(r^{-1}\exp(ir\theta\cdot x)\right) ,v\right\rangle\right| \\
 & = {C} \frac{|a_0|}{r\delta}\| v \|_{H^1_0(\Omega)} + \left|\left\langle (\nabla \chi_\delta)\cdot a_0 \, \nabla\left(r^{-1}\exp(ir\theta\cdot x)\right),v\right\rangle\right|; 
 \end{split}
 \]
 in the last step we have used the fact that $\div\left(a_0\,\nabla\left(r^{-1}\exp(ir\theta\cdot x)\right)\right)=0$, which follows immediately from  $\theta^Ta_0\theta = 0$. Integration by parts yields
 \[  \left| \langle a_0\,\nabla u_{r,\delta} ,\nabla v \rangle\right|  \leq {C} \frac{|a_0|}{r\delta}\| v \|_{H^1_0(\Omega)} + \left|\left\langle  r^{-1}\exp(ir\theta\cdot x), \div(v a_0^T \nabla \chi_\delta) \right\rangle\right|. \]
We estimate the final inner product by observing that $\chi_\delta $ is $O(\delta^{-3/2})$, its gradient is $O(\delta^{-5/2})$ and its second derivatives $O(\delta^{-7/2})$, while its support is a ball 
whose volume is $O(\delta^3)$: thus
\[  \left| \langle a_0\,\nabla u_{r,\delta},\nabla v\rangle\right|  \leq  C\frac{|a_0|}{r}\left\{ \frac{1}{\delta}+\frac{1}{\delta^2}\right\}\| v \|_{H^1_0(\Omega)}, \]
for some constant $C>0$. Substituting this back into (\ref{mm:ws1}) we obtain
\[ \| \div(a\,\nabla u_{r,\delta})\|_{H^{-1}(\Omega)} \leq \| a - a_0 \|_{L^\infty(B_{2\delta}(x_0))} + C\frac{|a_0|}{r}\left\{\frac{1}{\delta}+\frac{1}{\delta^2}\right\}. \]
Letting $r\nearrow \infty$ and then letting $\delta\searrow 0$ we obtain the required result.
\end{proof}

\renewcommand{\norm}[1]{||#1||}

\appendix
\section{The Helmholtz decomposition for cylinders}\label{sec:appendixA}

This appendix is devoted to the study of the decompositions \eqref{mm:15} and \eqref{mm:16} for a large class of cylinders of the form $\Omega=\R\times \Omega'$, with $\Omega'\subseteq\R^2$. We will then show that this class includes the full space, the half-space, the slab, and the cylinders with bounded sections as in Proposition~\ref{prop:mm9} part(6), thereby providing a proof to the corresponding parts of Proposition~\ref{prop:mm9}.

 We denote coordinates 
in $\Omega$ by $(x_1,x')$ where $x'=(x_2,x_3)\in\Omega'$, with similar conventions for components of vectors  and operators, such as gradient and Laplacian. For simplicity of notation, we shall write $a\lesssim b$ to mean $a\le C b$ for some positive constant $C$ depending only on $\Omega'$. We assume that
the cross-section $\Omega'$ satisfies the following additional hypothesis.
\begin{assumption}  \label{ch1} Let $g,h\in L^2(\Omega')$. If $\psi' \in\mathcal{D}'(\Omega')$ satisfies 
\begin{equation}\label{eq:hp1}
\left\{ 
 \begin{array}{ll} 
   \curl' \psi' =  g \;\; &\text{in $ \Omega'$,}   \\
    \div' \psi' =  h  \;\; &\text{in $ \Omega'$,}    \\
     \text{$\psi'\cdot \nu'=0$  or $\psi \cdot \tau' = 0$} &\text{on $\partial \Omega'$,}  
\end{array}
\right.
\end{equation}
where $\nu'=(\nu_2,\nu_3)$ and $\tau'=(-\nu_3,\nu_2)$ denote the unit normal and tangent vectors to $\partial\Omega'$, respectively,  then
$$\norm{\nabla'\psi'}_{L^2(\Omega')}\lesssim\norm{ g }_{L^2(\Omega')}+\norm{h}_{L^2(\Omega')}.$$
\end{assumption}
This assumption guarantees the existence of the decompositions \eqref{mm:15} and \eqref{mm:16} with the spaces $\kt$ and $\kn$ (Definition \ref{def:11})
both trivial.
\begin{prop}\label{prop:cylinders}
Let $\Omega = {\mathbb R}\times\Omega'$, where $\Omega'\subseteq\R^2$ is a Lipschitz domain satisfying Assumption~\ref{ch1}. Then {$\kn=\kt=\{0\}$ and}
\begin{enumerate}[(a)]
\item $\Hdivzero = \curl\{\psi\in\dot H^1(\Omega):\div\psi=0\;\text{in $\Omega$, $\psi\cdot\nu=0$ on $\partial\Omega$}\}$,
\item $\Hdivzerozero = \curl\{\psi\in\dot H^1(\Omega):\div\psi=0\;\text{in $\Omega$, $\psi\times\nu=0$ on $\partial\Omega$}\}$.
\end{enumerate}
\end{prop}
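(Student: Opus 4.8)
The plan is to reduce the three-dimensional decomposition on the cylinder $\Omega=\R\times\Omega'$ to the two-dimensional hypothesis recorded in Assumption~\ref{ch1} by Fourier-transforming in the axial variable $x_1$. First I would take an arbitrary $u\in\Hdivzero$ and write $u(x_1,x')=\int_\R \hat u(\xi,x')e^{i\xi x_1}\,d\xi$; the condition $\div u=0$ becomes $i\xi\hat u_1+\div'\hat u'=0$ for a.e.\ $\xi$. The goal is to produce, for each frozen $\xi$, a vector potential $\hat\psi(\xi,\cdot)$ solving $\curl\hat\psi=\hat u$ with $\div\hat\psi=0$ and the appropriate boundary condition ($\hat\psi\cdot\nu=0$ on $\partial\Omega$ for part (a), $\hat\psi\times\nu=0$ for part (b)), and then to check that $\psi:=\int_\R\hat\psi(\xi,\cdot)e^{i\xi x_1}\,d\xi$ lies in $\dot H^1(\Omega)$ by controlling $\|\nabla'\hat\psi(\xi,\cdot)\|_{L^2(\Omega')}$ and $\|\xi\hat\psi(\xi,\cdot)\|_{L^2(\Omega')}$ uniformly, using Assumption~\ref{ch1} applied on $\Omega'$ with the transverse curl/divergence data built from $\hat u$.

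The key steps, in order, are: (i) set up the axial Fourier transform on the homogeneous space $\dot H^1(\Omega)$ and on $L^2(\Omega;\C^3)$, noting Plancherel in $x_1$; (ii) for fixed $\xi\ne 0$, split the three-component equation $\curl\hat\psi=\hat u$ into its $x_1$-component (a two-dimensional scalar equation $\curl'\hat\psi'=\hat u_1$) and its transverse part (relating $\hat\psi_1$ and $\partial_1$-derivatives, i.e.\ multiplication by $i\xi$, to $\hat u'$); (iii) solve the resulting two-dimensional elliptic system for $(\hat\psi_1,\hat\psi')$, imposing $\div\hat\psi=0$ to fix the gauge, and invoke Assumption~\ref{ch1} to get the a priori bound $\|\nabla'\hat\psi'(\xi)\|+\| \xi \hat\psi'(\xi)\| \lesssim \|\hat u(\xi)\|$, together with the analogous scalar estimate for $\hat\psi_1$; (iv) integrate these bounds in $\xi$ against $d\xi$ to conclude $\psi\in\dot H^1(\Omega)$ with the correct boundary condition, so $u=\curl\psi$ lies in the claimed set; (v) the reverse inclusion ($\curl$ of such a $\psi$ is divergence-free with the stated normal or tangential trace) is immediate from $\div\circ\curl=0$ and the trace identities already cited in Lemma~\ref{lem:closed}. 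The statement $\kt=\kn=\{0\}$ should then follow: any $u\in\kt$ or $\kn$ is curl-free and divergence-free with a boundary condition, and the same Fourier argument (now with zero data) forces $\hat u(\xi,\cdot)=0$ for a.e.\ $\xi$.

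The main obstacle I anticipate is the behaviour at the zero axial frequency $\xi=0$, where the transverse system degenerates and the factor $|\xi|^{-1}$ one would like to use to recover $\hat\psi_1$ from $\hat u'$ blows up; here one must argue that the $\xi=0$ slice contributes a null set in the Fourier integral, or handle it by a separate limiting argument showing the potential stays in $\dot H^1$ without an $L^2$ control on $\psi$ itself (recall the homogeneous space only controls $\nabla\psi$). A secondary technical point is justifying that the Fourier-synthesised $\psi$ genuinely lies in the completion $\dot H^1(\Omega)$ of $\mathcal D(\overline\Omega)$ rather than merely having an $L^2$ gradient — this is where the equivalence of norms remark after Definition~\ref{mm:beppo} and a density/truncation argument in $x_1$ will be needed. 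Once the uniform-in-$\xi$ transverse estimate from Assumption~\ref{ch1} is in hand, the remaining manipulations (splitting components, checking boundary traces slice-by-slice, assembling the $\xi$-integral) are routine.
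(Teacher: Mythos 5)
Your plan reproduces the paper's argument: Fourier transform in the axial variable $x_1$, reduce to a two-dimensional elliptic problem on each slice $\Omega'$ (a Dirichlet/Neumann problem for $\hat\psi_1$, then algebraic recovery of $\hat\psi_2,\hat\psi_3$ via division by $\xi$), apply Assumption~\ref{ch1} for the transverse gradient bound, and handle the $\xi=0$ degeneracy and membership in $\dot H^1(\Omega)$ by a regularisation-and-density argument — which is precisely what the paper does with $\hat\psi_{i,\epsilon}=\frac{|\xi|}{|\xi|+\epsilon}\hat\psi_i$. You have correctly identified both of the genuine technical obstacles (the $|\xi|^{-1}$ blow-up and the fact that $\dot H^1$ is a completion, not merely the space of functions with $L^2$ gradient), so the proposal is sound and follows essentially the same route.
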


\begin{proof}
We divide the proof into three steps.
\begin{enumerate}
\item  First, we prove that every function $f$ in $\Hdivzerozero$ may be written as the curl of a unique divergence-free function $\psi$ such that $\psi\times\nu=0$ on $\partial\Omega$. In particular, this implies that the space $\kn$  is trivial.
\item Second, we prove that every function $f$ in $\Hdivzero$ may be written as the curl of a unique divergence-free function $\psi$ such that $\psi\cdot\nu=0$ on $\partial\Omega$. In particular, this implies that the space $\kt$  is trivial.
\item Third, we prove that the potentials $\psi$ constructed in steps (1) and (2) belong to $ \dot H^1(\Omega)$.
\end{enumerate}

\textit{Step (1).}
Given $f\in \Hdivzerozero$, we look for $\psi$ such that
\begin{align} 
   \curl \psi = {}& f \;\;{\rm in}\; \Omega, \label{c1} \\
    \div \psi = {}& 0\;\;{\rm in} \;\Omega, \label{c2} \\
      \psi \times \nu = {}& 0\;\;{\rm on}\; \partial \Omega. \label{c3} 
\end{align}
Since $\nuc1=0$,  the second and third components of (\ref{c3}) yield $\psic1\nuc3=0$ and $\psic1\nuc2=0$, giving
$\psic1=0$ on $\partial \Omega$.
Taking the $\curl$ of equation (\ref{c1}) we obtain
$$-\Delta \psic1=\partialc2 f_3 -\partialc3 f_2\quad{\rm in}\; \Omega;$$
upon taking the Fourier transform with respect to the first coordinate $x_1$ we obtain the boundary value problem
\begin{equation}
\left \{  \begin{array}{ll}
 -\Delta'\psich1 +\xi^2 \psich1 =\partialc2\f{f_3}-\partialc3 \f{f_2} & {\rm in \;}\Omega',\\
 \psich1=0 &    {\rm on \;}   \partial \Omega',
 \end{array}
 \right .  \label{c4}
 \end{equation}
 in which $\Delta'$ denotes the Laplacian with respect to $x'\in\Omega'$ and $\xi\in \mathbb R$ is the dual variable of $x_1$ under Fourier 
 transformation.  {For almost every $\xi\in\R$, this Dirichlet boundary value problem admits a unique solution $\hat\psi_1(\xi)\in\dot H^1_0(\Omega')$ by the Lax Milgram theorem, and so $\psi_1$ is uniquely determined.}  To obtain the remaining components
 of $\psi$ we rewrite (\ref{c1}) and   (\ref{c2})   as 
 $$ \partialc2\psic3-\partialc3\psic2=f_1,\quad  \partialc3\psic1-\partialc1\psic3=f_2,  \quad  \partialc1\psic2-\partialc2\psic1=f_3,\quad    \partialc1 \psic1+\partialc2 \psic2+\partialc3 \psic3=0.$$   Again take the Fourier transform with respect to $x_1$ and obtain
 \begin{equation}\label{eq:c5mm1}
 \partialc2\psich3-\partialc3\psich2=\f{f_1},\quad   \partialc3\psich1-i \xi\psich3=\f{f_2},\quad   i\xi \psich2- \partialc2\psich1=\f{f_3},\quad   i\xi \psich1+\partialc2 \psich2+\partialc3 \psich3=0.\end{equation}
 Using the second and third identities in (\ref{eq:c5mm1}) yields
 $$
 \psich2=-i \dfrac{  \partialc2 \psich1+\f{f_3}}\xi, \quad  \psich3= i    \dfrac{       \f{f_2}  -\partialc3 \psich1     }{\xi },\qquad {\text{for a.e.\ $\xi\in\R$}}.
$$
 It remains to check the first and fourth identities in (\ref{eq:c5mm1}) and the first component of (\ref{c3}). 
 For the first identity in (\ref{eq:c5mm1}) we observe that
 \[ \partial_2{\hat\psi}_3-\partial_3\hat{\psi}_2 = \frac{i}{\xi}\left(\partial_2\hat{f}_2-\partial_{23}\hat{\psi}_1 + \partial_{32}\hat{\psi}_1+\partial_3 \hat{f}_3\right) = \frac{i}{\xi}(-i\xi\hat{f}_1) = \hat{f}_1.\]
Here we have used, for the second equality, the fact that $\div f=0$.  For the fourth identity in (\ref{eq:c5mm1}), by \eqref{c4} we have
\begin{align*}  i\xi \psich1+\partialc2 \psich2+\partialc3 \psich3 & =  i\xi\psich1 + \frac{i}{\xi}\left( -\partial_2^2\psich1-\partial_2\hat{f}_3+\partial_3\hat{f}_2-\partial_3^2\psich1    \right)\\
& =  \frac{i}{\xi}\left( (\xi^2\psich1-\partial_2^2\psich1-\partial_3^2\psich1) - (\partial_2\hat{f}_3-\partial_3\hat{f}_2)   \right) \\
 & =  0. \end{align*}
 Finally, for the first component of (\ref{c3}), using ${\mathcal F}$ to denote the Fourier transform,
 \[ \nu_3\psich2-\nu_2\psich3 = \frac{i}{\xi}\left(-\partial_2 \psich1 \nu_3 - \hat{f}_3\nu_3 - \hat{f}_2\nu_2 + \partial_3\psich1\nu_2 \right)
  = \frac{i}{\xi}{\mathcal F}{(\nabla' \psi_1 \cdot \tau - f\cdot \nu)} = 0,\]
  where we have used the fact that {$\psi_1=0$ and }$f\cdot \nu=0$ on $\partial\Omega$ in the last step.

 \textit{Step (2).}
 The only difference between this case and the one above lies in the boundary conditions. We no longer have $f\cdot \nu=0$ on the boundary.
This time $f\in\Hdivzero$ and we seek $\psi$ such that
 \begin{align} 
   \curl \psi = {}& f \;\;{\rm in}\;\; \Omega, \label{c11} \\
    \div \psi = {}& 0\;\;{\rm in} \;\;\Omega, \label{c12} \\
      \psi \cdot \nu = {}& 0\;\;{\rm on}\; \partial \Omega. \label{c13} 
\end{align}
The calculations follow as above except that  problem (\ref{c4}) is replaced by 
\begin{equation}
\left \{  \begin{array}{ll}
 -\Delta'\psich1 +\xi^2 \psich1 \; = \; \div'( \f{f_3},-  \f{f_2}) & {\rm in \;}\Omega',\\
 -\nabla'\psich1\cdot \nu'\; = \;  (\f{f_3},-\f{f_2})\cdot\nu' &    {\rm on \;}   \partial \Omega',
 \end{array}
 \right .  \label{c14}
 \end{equation}
 in which the reason for the slightly curious Neumann boundary condition will become clear shortly.  {As above, for almost every $\xi\in\R$, this problem admits a unique solution $\hat\psi_1(\xi)\in\dot H^1(\Omega')$ by the Lax Milgram theorem.} Having found $\psi_1$, we construct $\psich2$ and $\psich3$ as before, 
 and the verification of (\ref{c11}) and (\ref{c12}) is similar to the calculations for (\ref{c1})  and (\ref{c2}). This leaves the boundary condition (\ref{c13}): since $\hat{\psi}\cdot\nu  = \psich2\nu_2+\psich3\nu_3$, we have
\[
 \hat{\psi}\cdot\nu  = \frac{-i}{\xi}\left\{ (\partial_2\psich1+\hat{f}_3)\nu_2 - (\hat{f}_2-\partial_3\psich1)\nu_3\right\} 
 = \frac{-i}{\xi}\left\{\nabla'\psich1\cdot\nu' + (\hat{f}_3,-\hat{f}_2)\cdot \nu'\right\}  = 0, 
\]
the equality at the last step coming from the boundary equation in (\ref{c14}).
 
\textit{Step (3).}
 We now verify that  $\psi$ lies in $\dot{H}^1(\Omega)$ in both cases.  From (\ref{c4},\ref{c14})  we have 
 \begin{equation}
\left \{  \begin{array}{ll}
 -\Delta'\psich1 +\xi^2 \psich1 =\div'( \f{f_3},-  \f{f_2}) & {\rm in \;}\Omega',\\
\psich1=0\;\;{\rm or }\;\; -\nabla'\psich1\cdot \nu'= (\f{f_3},-\f{f_2})\cdot\nu' &    {\rm on \;}   \partial \Omega'.
 \end{array}
 \right .  \label{c15}
 \end{equation}
 An integration  against $\psich1$ gives
 $$ \norm{\nabla'\psich1}^2_{L^2(\Omega')} +  \xi^2 \norm{\psich1}_{L^2(\Omega')}^2=
 ((-\f{f_3},\f{f_2}),\nabla'\psich1)_{L^2(\Omega')}
 $$
 whence
 \begin{equation}
 \norm{ \nabla'\psich1(\xi)}_{L^2(\Omega')}  
 \leq\norm{     \f{  f'}(\xi)  }_{L^2(\Omega')}, \;\;\;
 \norm{  \psich1(\xi)}_{L^2(\Omega')}\leq \dfrac{   \norm{   \f{  f'} (\xi) }_{L^2(\Omega')}}{|\xi|}.
 \label{c7} 
 \end{equation}
From the first and fourth identities of \eqref{eq:c5mm1}
 we get, for {almost every} $\xi \in \R$,
\[
   \begin{array}{ll} 
   \curl' \hat{\psi}' = {} \hat{f}_1  \;\; &{\rm in} \;\;   \Omega',   \\
    \div' \hat{\psi}' = {} -i\xi\psich1  \;\; &{\rm in} \;\;  \Omega'.   
\end{array}
\]
We also have the desired boundary conditions:
$$\hat \psi' \cdot \tau'=0  \;{\rm on} \; \partial \Omega'\; \mbox{for (b),}\;\; {\rm or } \;\;  \hat \psi' \cdot \nu'=0\; {\rm on} \; \partial \Omega'\; \mbox{for (a).}
$$
By Assumption~\ref{ch1}  we have  
\begin{equation}\label{c5mm3}\norm{\nabla'\hat \psi'(\xi)}_{L^2(\Omega')}   \lesssim \norm{\hat{f}_1(\xi)}_{L^2(\Omega')}+\norm {\xi\psich1(\xi)}_{L^2(\Omega')} \lesssim \norm{\hat{f} (\xi)}_{L^2(\Omega')},
\end{equation}
the last inequality following from the second inequality in (\ref{c7}). 

 We now regularise $\hat\psi_i$  for $\xi\to 0$.
 Define, for $\epsilon >0$,
 $$\f{\psi}_{i,\epsilon}(\xi,x')=\dfrac{|\xi|}{|\xi|+\epsilon} \f{\psi_{i}}(\xi,x'),\quad i=1,2,3.$$
 By a direct calculation we have
 $\nabla'\f{\psi_i}-\nabla'\f{\psi}_{i,\epsilon}   =\dfrac{\epsilon}{|\xi| +\epsilon}\nabla'\f{\psi}_i$,
 and so we get
 $$
 \norm{  \nabla'\f{\psi_i}-\nabla'\f{\psi}_{i,\epsilon}}_{L^2(\Omega')} =\dfrac{\epsilon}{|\xi|+\epsilon}    \norm{  \nabla'\f{\psi}_i}_{L^2(\Omega')} \leq \norm{  \nabla'\f{\psi}_i}_{L^2(\Omega') } \lesssim
 \norm{  \f{f}(\xi)}_{L^2(\Omega') } ,$$
 where the last inequality follows from (\ref{c7}) for $i=1$ and (\ref{c5mm3})  for $i=2,3$.  
 By the Dominated Convergence Theorem, therefore, 
 $$\lim_{\epsilon\to 0}\norm{\nabla'\f{\psi_{i}}-\nabla'\f{\psi}_{i,\epsilon}}_{L^2(\Omega)}= 0.$$
By direct calculation,
 \begin{equation}\label{c5mm4}\norm{  \xi \f{\psi}_{i}-\xi \f{\psi}_{i,\epsilon}}_{L^2(\Omega')}=\dfrac{ |\xi |\epsilon}{|\xi|+\epsilon} \norm{  \f{\psi}_{i}}_{L^2(\Omega')}.
 \end{equation}
 Now, using (\ref{c7}) for $i=1$  and the two identities
 $$\xi\f{\psi}_2=-i(\partialc2\f{\psi}_1+\f{f}_3), \quad   \xi\f{\psi}_3 = i  (\f{f}_2-\partialc2\f{\psi}_1),
 $$
 together with (\ref{c7}) for $i=2,3$, we have $|\xi|\cdot\norm{\f{\psi}_i}_{L^2(\Omega') }\lesssim\norm{\f{f}}_{L^2(\Omega') } $, whence
 $\norm{\xi \f{\psi_i}-\xi \f{\psi_{i,\epsilon}}}_{L^2(\Omega')}\lesssim\norm{\f{f}}_{L^2(\Omega')}.
 $
 Taking inverse Fourier transforms in (\ref{c5mm4}), dominated convergence yields
 $$\lim_{\epsilon\to 0}\norm{  \partial_1\psi_{i,\epsilon}-\partial_1 \psi_{i }}_{L^2(\Omega)}
= 0 . $$
Altogether we have that $ \norm{  \nabla\psi_{i,\epsilon}- \nabla\psi_{i }}_{L^2(\Omega)}\to 0$ as $\epsilon\to 0$, namely
\begin{equation}
\lim_{\epsilon\to 0}\norm{  \psi_{i,\epsilon}- \psi_{i }}_{\dot H^1(\Omega)}= 0.  \label{c8}
\end{equation}

Since  $| \f{\psi}_{i,\epsilon}(\xi,x')| \leq \epsilon^{-1} | \xi \f{\psi_{i }}(\xi,x')| $   we have that 
$ \norm{ \psi_{i,\epsilon}}_{L^2(\Omega)}\leq \epsilon^{-1}  \norm{ \partialc1\psi_i}_{L^2(\Omega)}<+\infty$,
and so   $ \psi_{i,\epsilon}\in H^1(\Omega)$.   Since $H^1(\Omega)$ is dense in $\dot{H}^1(\Omega)$,  
by (\ref{c8}) we conclude that $\psi\in \dot{H}^1(\Omega)$.
\end{proof}

{
We now observe that Assumption~\ref{ch1} is verified in many situations of interest.
\begin{lem}\label{lem:cylinders}
Assumption~\ref{ch1} is verified in each of the following cases:
\begin{enumerate}
\item $\Omega'$ is the full space $\R^2$;
\item $\Omega'$ is the half space $\{(x_2,x_3)\in \R^2: x_3>0\}$;
\item $\Omega'$ is a strip $\{(x_2,x_3)\in \R^2: 0<x_3<L\}$ for some $L>0$;
\item $\Omega'$ is a  simply connected bounded domain of class 
$C^{1,1}$ or piecewise smooth with no re-entrant corners.
\end{enumerate}
\end{lem}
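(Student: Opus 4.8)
The plan is to split off the three ``flat'' cases (1)--(3), for which a Fourier/reflection argument gives Assumption~\ref{ch1} with the sharp constant $1$, from the bounded case (4), which I would reduce to $H^2$ elliptic regularity via a Helmholtz splitting. For $\Omega'=\R^2$, any $\psi'\in\mathcal D'(\R^2;\C^2)$ with $\div'\psi',\curl'\psi'\in L^2$ has, for $\xi'\neq0$, its Fourier transform $\widehat{\psi'}(\xi')$ uniquely determined by $\widehat{\div'\psi'}(\xi')$ and $\widehat{\curl'\psi'}(\xi')$ through the matrix with rows $(\xi_2,\xi_3)$ and $(-\xi_3,\xi_2)$; these rows being orthogonal of common length $|\xi'|$, one gets $|\widehat{\nabla'\psi'}(\xi')|^2=|\widehat{\div'\psi'}(\xi')|^2+|\widehat{\curl'\psi'}(\xi')|^2$, and Plancherel yields $\|\nabla'\psi'\|_{L^2(\R^2)}^2=\|\div'\psi'\|_{L^2}^2+\|\curl'\psi'\|_{L^2}^2$, which in particular shows $\psi'\in H^1_{loc}$. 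For the half-space $\{x_3>0\}$ I would extend $\psi'$ across $\{x_3=0\}$ by reflection, one component even and the other odd according to whether $\psi'\cdot\nu'=0$ or $\psi'\cdot\tau'=0$ is assumed, so that $\div'\psi'$ and $\curl'\psi'$ extend to $L^2(\R^2)$ functions with no boundary distribution arising on $\{x_3=0\}$; the $\R^2$ bound then descends to the half-space. The strip $\{0<x_3<L\}$ follows by iterating the reflection, or equivalently by a partial Fourier transform in $x_2$ and an elementary ODE analysis in $x_3$. (A unifying way to see why the boundary plays no role here is the identity
\[
\|\nabla'u\|_{L^2(\Omega')}^2=\|\div'u\|_{L^2(\Omega')}^2+\|\curl'u\|_{L^2(\Omega')}^2-2\int_{\partial\Omega'}u_2\bigl(\nu_2\,\partial_3u_3-\nu_3\,\partial_2u_3\bigr)\,ds,
\]
coming from writing the cross term $\partial_2u_2\,\partial_3u_3-\partial_2u_3\,\partial_3u_2$ as a divergence: on boundary segments parallel to the axes either $u_2$ vanishes or $u_3$ is constant along the segment, so the boundary integral drops out.)

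For case (4), $\Omega'$ is a bounded, simply connected Lipschitz domain with connected boundary, of class $C^{1,1}$ or piecewise smooth with no re-entrant corner. I would first upgrade $\psi'$ to $H^1$: solving a Laplace equation with right-hand side $\div'\psi'$ produces $\phi$ with $\nabla'\phi\in H^1(\Omega')$; the remainder $\psi'-\nabla'\phi$ is divergence-free, hence, $\Omega'$ being simply connected, it is a rotated gradient $(\partial_3\chi,-\partial_2\chi)$ with $-\Delta'\chi=\curl'\psi'\in L^2$, so $\chi\in H^2$; thus $\psi'\in H^1(\Omega')$ and its normal and tangential traces are classical. Next I choose the boundary conditions for $\phi$ and $\chi$ to be the pair complementary to the one assumed on $\psi'$: if $\psi'\cdot\tau'=0$, Dirichlet for $\phi$ and Neumann for $\chi$; if $\psi'\cdot\nu'=0$, Neumann for $\phi$ and Dirichlet for $\chi$ --- in each case the Neumann compatibility condition ($\int_{\Omega'}\div'\psi'=0$, resp.\ $\int_{\Omega'}\curl'\psi'=0$) is precisely the boundary condition imposed on $\psi'$. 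The constant-free $H^2$ estimates for the Dirichlet and Neumann Laplacians, valid on $C^{1,1}$ domains and on polygons without re-entrant corners (see, e.g., \cite{AMROUCHE-BERNARDI-DAUGE-GIRAULT-1998,dautray-lions-3}), then give $\|\nabla'\nabla'\phi\|_{L^2}\lesssim\|\div'\psi'\|_{L^2}$ and $\|\nabla'(\partial_3\chi,-\partial_2\chi)\|_{L^2}\lesssim\|\curl'\psi'\|_{L^2}$, and summing these two estimates proves Assumption~\ref{ch1}. (Alternatively one may quote directly the $H^1$-regularity and norm equivalence for $H(\curl',\Omega')\cap H_0(\div',\Omega')$ and $H_0(\curl',\Omega')\cap H(\div',\Omega')$ from \cite{AMROUCHE-BERNARDI-DAUGE-GIRAULT-1998}, the $L^2$-term being absorbable because, $\Omega'$ being simply connected with connected boundary, the harmonic normal and tangential fields are trivial, as in Proposition~\ref{prop:mm9}(2).)

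The main obstacle is case (4): the estimate has to hold \emph{without} any lower-order $\|\psi'\|_{L^2}$ term on the right, which forces the simultaneous use of the sharp $H^2$ elliptic regularity --- exactly what breaks down at a re-entrant corner, which is why that hypothesis is imposed --- and of the vanishing of the harmonic normal and tangential fields, which comes from simple connectivity together with connectedness of $\partial\Omega'$. The flat cases (1)--(3) are then routine bookkeeping once the Fourier/reflection identity is written down. Finally, substituting $\Omega'=\R^2$, the two-dimensional half-space, the two-dimensional strip, and a bounded $\Omega'$ as in (4) into Proposition~\ref{prop:cylinders} establishes parts (1), (4), (5) and (6) of Proposition~\ref{prop:mm9}, respectively, completing the proof promised at the start of this appendix.
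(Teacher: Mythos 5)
Your proof is correct and follows essentially the same strategy as the paper: Fourier analysis on the flat cases and elliptic regularity in the bounded case. The only real differences are stylistic. For (2)--(3) the paper expands in a sine/cosine basis adapted to the boundary condition, which is the same thing as your odd/even reflection across the flat boundary; your parenthetical divergence identity is a nice alternative that bypasses Fourier analysis altogether, and it correctly kills the boundary integral because on a flat boundary either $u_2$ or the tangential derivative of $u_3$ vanishes. For (4) the paper simply cites \cite[Ch.\ I, Rem.\ 3.5]{GIRAULT-RAVIART-1986}, whereas you reconstruct the argument via the Helmholtz splitting $\psi'=\nabla'\phi+(\partial_3\chi,-\partial_2\chi)$ and constant-free $H^2$ estimates for the Dirichlet/Neumann Laplacian; this is in substance the proof contained in the cited reference. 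Your assignment of complementary boundary conditions to $\phi$ and $\chi$ is consistent (the Neumann compatibility constraint on $\Delta'\phi$, resp.\ $\Delta'\chi$, follows from $\psi'\cdot\nu'=0$, resp.\ $\psi'\cdot\tau'=0$, by the divergence theorem), and the triviality of the harmonic remainder uses, as you note, both simple connectivity and connectedness of $\partial\Omega'$.
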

}
\begin{proof}
\textit{(1)}
Taking Fourier transforms in \eqref{eq:hp1} we obtain
$$  i\xi_2 \f{\psi}_3'-i\xi_3 \f{\psi}_2'=\f{g},\quad i\xi_2 \f{\psi}_2'+i\xi_3 \f{\psi}_3'=\f{h},$$
with unique solution
$$\f{\psi}_2'= \dfrac{ -i \xi_2 \f{h}+i\xi_3 \f{g}}{|\xi'|^2}, \quad
\f{\psi}_3'= \dfrac{  -i\xi_3 \f{h}-i\xi_2 \f{g}}{|\xi'|^2}.$$
Hence
$|\xi_2\f{\psi_2'}|= \frac{  |\xi_2^2\f{h}|}{|\xi'|^2}  +    \frac{  |\xi_2  \xi_3 \f{g}|}{|\xi'|^2}\leq  |\f{h}|+   |\f{g}|,$ so that
$$\norm{ \partial_2 {\psi_2'}}_{L^2(\R^2)}  \leq\norm{\f{g}}_{L^2(\R^2)}+ \norm{\f{h}}_{L^2(\R^2)},$$
and similarly  for the other conditions.

\textit{(2)} In this case
the boundary condition is either $\psi'_2=0$  or $\psi'_3=0$ on $\{x_3=0\}$. We study the case $\psi'_2=0$; the other is similar.

Taking Fourier transforms  we get
\begin{align*}
  \psi_2'(x_2,x_3)&=\int_\R\int_{\R_+}  \f{\psi}'_2 (\xi_2,\xi_3)e^{i\xi_2x_2} \sin(\xi_3 x_3)d \xi_3 d\xi_2,\\
 \psi_3'(x_2,x_3)&=\int_\R\int_{\R_+}  \f{\psi}'_2 (\xi_2,\xi_3)e^{i\xi_2x_2} \cos(\xi_3 x_3)d \xi_3 d\xi_2,\\
 g(x_2,x_3)&=\int_\R\int_{\R_+}  g (\xi_2,\xi_3)e^{i\xi_2x_2} \cos(\xi_3 x_3)d \xi_3 d\xi_2,\\
 h(x_2,x_3)&=\int_\R\int_{\R_+}  h (\xi_2,\xi_3)e^{i\xi_2x_2} \sin(\xi_33 x_3)d \xi_3 d\xi_2.
\end{align*}
The equations  in \eqref{eq:hp1} become
$$i\xi_2\f{\psi_3'}-\xi_3\f{\psi_2'}=\f{g},\qquad  i\xi_2\f{\psi_2'}-\xi_3\f{\psi_3'}=\f{h},
$$
and then everything proceeds as for the case $\Omega'={\mathbb R}^2$.

\textit{(3)} This follows by using the Fourier transform with respect to the variable $x_2$ and the Fourier series in the variable $x_3$, as in cases (1) and (2)
above; the calculations are completely analogous.

\textit{(4)}
This part was proven in \cite[Chapter~1, Remark~3.5]{GIRAULT-RAVIART-1986}).
\end{proof}

\bibliographystyle{abbrv}
\bibliography{biblio}

\end{document}